\documentclass[12pt,a4paper]{amsart}
\usepackage{amsfonts}
\usepackage{amsthm}
\usepackage{amsmath, amscd, amssymb, mathtools, color}
\usepackage[latin2]{inputenc}
\usepackage{t1enc}
\usepackage[mathscr]{eucal}
\usepackage{indentfirst}
\usepackage{graphicx}
\usepackage{graphics, tikz}
\usepackage{hyperref}
\usepackage{pict2e}
\usepackage{epic}
\numberwithin{equation}{section}
\usepackage[margin=2.9cm]{geometry}
\usepackage{epstopdf}

\DeclareMathOperator{\reg}{reg}
\DeclareMathOperator{\lex}{lex}

%
%
\def\NZQ{\Bbb}               

\def\ZZ{{\NZQ Z}}

%
%
\def\frk{\frak}               

\def\Phi{{\frk n}}
\def\Phi{{\frk N}}
%

\def\MR{{\mathcal R}}

\def\MA{{\mathcal A}}

\def\MC{{\mathcal C}}
\def\MB{{\mathcal B}}
\def\MD{{\mathcal D}}

%
\def\ab{{\bold a}}
\def\bb{{\bold b}}

\def\opn#1#2{\def#1{\operatorname{#2}}} 
%
\opn\chara{char} \opn\length{\ell} \opn\pd{pd} \opn\rk{rk}
\opn\projdim{proj\,dim} \opn\injdim{inj\,dim} \opn\rank{rank}
\opn\depth{depth} \opn\grade{grade} \opn\height{height}
\opn\embdim{emb\,dim} \opn\codim{codim}

\opn\Tr{Tr} \opn\bigrank{big\,rank}
\opn\superheight{superheight}\opn\lcm{lcm}
\opn\trdeg{tr\,deg}
\opn\reg{reg} \opn\lreg{lreg} \opn\ini{in} \opn\lpd{lpd}
\opn\size{size}\opn\bigsize{bigsize}
\opn\cosize{cosize}\opn\bigcosize{bigcosize}
\opn\sdepth{sdepth}\opn\sreg{sreg}
\opn\link{link}\opn\fdepth{fdepth}
%
\opn\div{div} \opn\Div{Div} \opn\cl{cl} \opn\Cl{Cl}
%

\let\epsilon\varepsilon
\let\phi=\varphi
\let\kappa=\varkappa
%
\opn\Spec{Spec} \opn\Supp{Supp} \opn\supp{supp} \opn\Sing{Sing}
\opn\Ass{Ass} \opn\Min{Min}\opn\Mon{Mon} \opn\dstab{dstab} \opn\astab{astab}
\opn\Syz{Syz}
%
%
\opn\Ann{Ann} \opn\Rad{Rad} \opn\Soc{Soc}
%

%
%

%
%
\let\to=\rightarrow

\def\Implies{\ifmmode\Longrightarrow \else
        \unskip${}\Longrightarrow{}$\ignorespaces\fi}
\def\implies{\ifmmode\Rightarrow \else
        \unskip${}\Rightarrow{}$\ignorespaces\fi}
\def\iff{\ifmmode\Longleftrightarrow \else
        \unskip${}\Longleftrightarrow{}$\ignorespaces\fi}

\let\:=\colon
 \theoremstyle{plain}
\newtheorem{Theorem}{Theorem}[section]
 \newtheorem{Lemma}[Theorem]{Lemma}
 \newtheorem{Corollary}[Theorem]{Corollary}
 \newtheorem{Proposition}[Theorem]{Proposition}
 
 \newtheorem{conjecture}[Theorem]{Conjecture}
 \newtheorem{question}[Theorem]{Question}
 \theoremstyle{definition}
 \newtheorem{Definition}[Theorem]{Definition}
 \newtheorem{Remark}[Theorem]{Remark}
 
 \newtheorem{Example}[Theorem]{Example}
 
 \newtheorem{Notation}[Theorem]{Notation}
 
%
%
\let\epsilon\varepsilon
\let\kappa=\varkappa
%
%
\textwidth=15cm \textheight=22cm \topmargin=0.5cm
\oddsidemargin=0.5cm \evensidemargin=0.5cm \pagestyle{plain}
%



\begin{document}

\title{Second Powers of Cover Ideals of Paths}

\author[N. Erey]{Nursel Erey}

\address{Gebze Technical University \\ Department of Mathematics \\
Gebze \\ Kocaeli \\ 41400 \\ Turkey} 

\email{nurselerey@gtu.edu.tr}

\author[A. A. Qureshi]{Ayesha Asloob Qureshi}

\address{Sabanc\i \; University, Faculty of Engineering and Natural Sciences, Orta Mahalle, Tuzla 34956, Istanbul, Turkey} 

\email{aqureshi@sabanciuniv.edu}

 \subjclass[2010]{Primary: 05E40. Secondary: 05C38}

 \keywords{Cover ideal, edge ideal, powers of ideals, linear quotients, path, chordal graph, vertex cover}

\begin{abstract} 
We show that the second power of the cover ideal of a path graph has linear quotients. To prove our result we construct a recursively defined order on the generators of the ideal which yields linear quotients. Our construction has a natural generalization to the larger class of chordal graphs. This generalization allows us to raise some questions that are related to some open problems about powers of cover ideals of chordal graphs.  
\end{abstract}

\maketitle


\section{Introduction}
Let $S=K[x_1,\dots ,x_n]$ be the polynomial ring in $n$ variables over a field $K$. We say that a monomial ideal $I$ has \emph{linear quotients} if there exists an order $u_1,\dots ,u_r$ of its minimal monomial generators such that for each $i=2,\dots ,r$ there exists a subset of $\{x_1,\dots ,x_n\}$ which generates the colon ideal $(u_1,\dots ,u_{i-1}):(u_i)$.

Ideals with linear quotients were introduced in \cite{ht}. Many interesting classes of ideals are known to have linear quotients. For example, stable ideals, squarefree stable ideals and (weakly) polymatroidal ideals all have linear quotients. Moreover, in the squarefree case having linear quotients translates into the shellability concept in combinatorial topology. Indeed, if $I$ is the Stanley-Reisner ideal of a simplicial complex $\Delta$, then $I$ has linear quotients if and only if the Alexander dual of $\Delta$ is shellable.

If an ideal $I$ has linear quotients, then $I$ is componentwise linear, i.e., for each $d$, the ideal generated by all degree $d$ elements of $I$ has a linear resolution. In particular, if $I$ is generated in single degree and has linear quotients, then it has a linear resolution. Herzog, Hibi and Zheng \cite{herzog hibi zheng} proved that when $I$ is a monomial ideal generated in degree $2$, the ideal $I$ has a linear resolution if and only if it has linear quotients. Moreover, they proved that if $I$ has a linear resolution, then so does every power of it.

Given a finite simple graph $G$ with vertices $x_1,\dots ,x_n$ the \emph{edge ideal} of $G$, denoted by $I(G)$, is generated by the monomials $x_ix_j$ such that $x_i$ and $x_j$ are adjacent vertices. Edge ideals are extensively studied in the literature, see for example survey papers \cite{ha survey, vil survey}. Since every edge ideal is generated in degree $2$, having linear resolution and having linear quotients are equivalent concepts for such ideals. Also, 
due to a result of Fr\"{o}berg \cite{froberg} it is known that the edge ideal $I(G)$ of a graph $G$ has a linear resolution if and only if the complement graph of $G$ is chordal.

The Alexander dual of $I(G)$ is known as the \emph{(vertex) cover ideal} of $G$ and, it is denoted by $J(G)$. Note that $J(G)$ is defined by
$$J(G)=(x_{i_1}\dots x_{i_k}: \{x_{i_1},\dots ,x_{i_k}\} \text{ is a minimal vertex cover of } G). $$

Due to a result of Herzog and Hibi \cite{herzog hibi componentwise} it is known that Stanley-Reisner ideal arising from a simplicial complex $\Delta$ is (componentwise) linear if and only if the Alexander dual of $\Delta$ is (sequentially) Cohen-Macaulay. This implies that
for any graph, being (sequentially) Cohen-Macaulay is equivalent to having a (componentwise) linear cover ideal.  Unlike the edge ideals, there is no combinatorial characterization of cover ideals with linear resolutions. In fact, the authors of \cite{fh} describe example of a graph in \cite[Example 4.4]{fh} whose  cover ideal has linear resolution if and only if the characteristic of the ground field is not two. The problem of classifying all Cohen-Macaulay or sequentially Cohen-Macaulay graphs is considered to be intractable and thus this problem is studied for special classes of graphs, for examples see \cite{estrada vil, fh, francisco van tuyl, herzog hibi dist, hhz chordal, vv, woodroofe chordal}.

Powers of edge ideals were studied by many authors recently, see for example the survey article \cite{banerjee survey}. A main motivation to study powers is to understand the behaviour of (Castelnuovo-Mumford) regularity in terms of graph properties. It is well-known \cite{cutkosky, kodiyalam} that if $I$ is a homogeneous ideal, then $\reg(I^s)$, the regularity of $I^s$, is a linear function in $s$ for sufficiently large $s$. Powers of cover ideals were relatively less explored than edge ideals in the literature. For example, in \cite{hang} for a unimodular hypergraph $\mathcal{H}$ (in particular bipartite graph) the regularity of $J(\mathcal{H})^s$ was determined for $s$ big enough. However, the regularity of $J(\mathcal{H})^s$ is unknown for small values of $s$. The reader can refer to \cite{constan, erey jpaa, fakhari reg, fakhari symbolic, hang, mohammadi2, selvar} for some recent articles where powers of cover ideals were studied.

Van Tuyl and Villarreal \cite{vv} showed that cover ideal of a chordal graph has linear quotients, extending the results in \cite{francisco van tuyl} where it was showed that such ideals are componentwise linear. In fact, Woodroofe \cite{woodroofe} showed that independence complex of a graph with no chordless cycles of length other than $3$ or $5$ is vertex decomposable and hence shellable. In \cite{herzog hibi ohsugi} the authors studied powers of componentwise linear ideals and they showed that all powers of a Cohen-Macaulay chordal graph have linear resolutions. Their proof was based on the method of \emph{$x$-condition} which, when satisfied, guarantees that all powers of the ideal have linear resolutions. More generally they proposed the following conjecture:

\begin{conjecture}\label{conjecture}\cite[Conjecture 2.5]{herzog hibi ohsugi} All powers of the vertex cover ideal of a chordal graph are componentwise linear.
\end{conjecture}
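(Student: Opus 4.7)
The plan is to attempt Conjecture~\ref{conjecture} by establishing the stronger statement that for every chordal graph $G$ and every $s \geq 1$, each graded component of $J(G)^s$ has linear quotients; since linear quotients implies componentwise linearity, this would resolve the conjecture. This is consistent with the known $s=1$ case (Van~Tuyl--Villarreal) and with the $s=2$, path case that the present paper will establish, so the natural strategy is to try to propagate the paper's recursive construction in both directions: to arbitrary chordal graphs and to higher powers.

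The induction is on the pair $(|V(G)|, s)$, ordered lexicographically. Fix a simplicial vertex $v$ of $G$ (which exists by the perfect elimination property of chordal graphs), and let $N[v]$ be its closed neighborhood, a clique. Partition the minimal generators of $J(G)^s$ according to how many of the $s$ factorizing minimal vertex covers of $G$ use $v$. Generators using $v$ in every factor correspond, after dividing by $x_v^s$, to combinations of generators of $J(G\setminus v)^s$ with additional multiplicative data on $N(v)$; generators using $v$ in none of the factors must contain a full copy of the clique $N(v)$ in each factor and are also parameterized by $J(G\setminus v)^s$. The proposed order lists first the generators inherited from $J(G\setminus v)^s$ (or from $J(G)^{s-1}$ when appealing to the second inductive parameter), and then appends the remaining generators in a lexicographic manner adapted to $N[v]$, checking at each step that the colon with the predecessors is generated by variables.

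The main obstacle is precisely this colon-check for $s \geq 2$: least common multiples of generators of $J(G)^s$ involve products of $s$ vertex covers, so the ``missing variables'' can be distributed in non-trivially coupled positions across the $s$ factors, and a naive order does not yield variable-generated colons. Overcoming this requires a fine understanding of when two different factorizations of the same monomial as a product of minimal vertex covers can be interchanged via a single-variable swap; this exchange combinatorics is already the heart of the $s=2$, path case developed in this paper. I therefore expect the hard part to be not the graph-class extension but the higher-power step: even for trees one currently only controls $s=2$. A realistic intermediate milestone is to first extend the paper's argument from paths to all trees at $s=2$, then to all chordal graphs at $s=2$, and only afterwards attack general $s$ by an inner induction. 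A fallback approach would be to verify the Herzog--Hibi--Ohsugi $x$-condition for $J(G)$ with $G$ chordal, yielding componentwise linearity of all powers via a different route, but since the $x$-condition is strictly stronger than componentwise linearity and can fail, it cannot be the final word on the conjecture in its full generality.
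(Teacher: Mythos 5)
The statement you are trying to prove is stated in the paper as an open conjecture (Herzog--Hibi--Ohsugi, Conjecture 2.5 of their paper); the paper offers no proof of it and only establishes the very special case that $J(P_n)^2$ has linear quotients for a path $P_n$. So there is no ``paper's own proof'' to compare against, and your text should be judged as a standalone proof attempt. As such, it is not a proof: it is a research plan in which the decisive step is explicitly left open. The entire content of the conjecture is concentrated in the sentence where you say you will ``check at each step that the colon with the predecessors is generated by variables'' --- and you then immediately concede that this colon-check fails for a naive order when $s\geq 2$, that it requires an exchange combinatorics you do not supply, and that even for trees only $s=2$ is currently controlled (in fact, the paper controls only paths at $s=2$). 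Announcing an induction on $(|V(G)|,s)$ does not discharge the inductive step.

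There are also concrete soundness issues in the parts you do sketch. For $s\geq 2$ the set $G(J(G)^s)$ is not simply the set of $s$-fold products of minimal covers: since $J(G)$ is not equigenerated for general chordal $G$, some $s$-fold products fail to be minimal generators, and the same monomial can have several factorizations into minimal covers. The paper devotes an entire section (the analysis of $F(J(P_n)^2)\setminus G(J(P_n)^2)$, maximal expressions, and Lemma~\ref{weak}) to exactly this phenomenon for the easiest nontrivial case; your partition of generators ``according to how many of the $s$ factors use $v$'' is therefore not even well defined without first fixing a canonical (e.g.\ lexicographically maximal) expression and proving that divisibility and order are compatible with it, which is where the real work lies. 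Finally, your fallback of verifying the $x$-condition cannot work in general: the paper notes that cover ideals of arbitrary chordal graphs are not generated in a single degree, so the $x$-condition method does not apply, and you yourself observe it is strictly stronger than what is needed. In short, the proposal identifies the right obstacles but does not overcome any of them; the conjecture remains open.
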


The $x$-condition method requires that the ideal is generated by the same degree monomials, which is indeed the case for the cover ideal of a Cohen-Macaulay chordal graph. However, generators of cover ideal of an arbitrary chordal graph can have different degrees. Therefore, the $x$-condition method cannot be applied in the general case. There has not been any progress on Conjecture~\ref{conjecture} except very few classes of graphs. In addition to Cohen-Macaulay chordal graphs it is known that the conjecture holds for generalized star graphs \cite{mohammadi powers of chordal}. Also, powers of cover ideals of Cohen-Macaulay chordal graphs are known to have linear quotients \cite{mohammadi powers of chordal}.

It is unknown if Conjecture~\ref{conjecture} is true for the second power of the cover ideal of a chordal graph. The following question arises naturally:
\begin{question}\label{myquestion} Let $G$ be a chordal graph.
	\begin{enumerate}
		\item Does $J(G)^2$ have linear quotients?
		\item Does $J(G)^s$ have linear quotients for all $s$?
	\end{enumerate}
\end{question}

In this paper we address Question~\ref{myquestion}~(1) for a path graph $P_n$. Our main result Theorem~\ref{main thm} states that second power of cover ideal of a path has linear quotients. We construct a recursively defined order, which we call \emph{rooted order}, on the minimal generators of $J(P_n)^2$ which produces linear quotients. Our method is purely combinatorial and it is completely different from the $x$-condition method which involves the study of Gr\"{o}bner bases of defining ideals of Rees algebras. 

We summarize the contents of this paper. In Section~\ref{sec:definitions} we introduce the necessary definitions and notations. Section~\ref{sec:properties of generators and rooted lists} is devoted to some technical results about the rooted lists as well as minimal generators of $J(P_n)$ and $J(P_n)^2$ which will be needed in the next section. In Section~\ref{sec:2fold products and minimal generators} we analyse some cases where the product of two generators of the cover ideal may not produce a minimal generator for the second power of the ideal. Note that if a monomial ideal $I$ is generated in the same degree and, $u$ and $v$ are two minimal generators, then the $2$-fold product $uv$ is necessarily a minimal generator of $I^2$. Since the cover ideal of a path is generated in different degrees, describing generators of the second power of the cover ideal is not trivial as in the case of equigenerated ideals. The goal of Section~\ref{sec:main results} is to prove the main result that $J(P_n)^2$ has linear quotients. We also extend the concept of rooted order to chordal graphs and discuss Question~\ref{myquestion}. 

\section{Definitions and Notations}\label{sec:definitions}

Let $K$ be a field and $S=K[x_1, x_2, \ldots, x_n]$ be the polynomial ring over $K$ in $n$ indeterminates. Let $G$ be a finite simple graph with the vertex set $V(G)=\{x_1, x_2, \ldots, x_n\}$ and the edge set $E(G)$. Then the \emph{edge ideal} $I(G) \subset S$ of $G$ is generated by all quadratic monomials $x_ix_j$ such that $\{x_i,x_j\} \in E(G)$. A {\em vertex cover} $C$ of $G$ is  a subset of $V(G)$ such that $C \cap e \neq \emptyset$, for all $e \in E(G)$. A vertex cover of $G$ is called {\em minimal} if it is not strictly contained in any other vertex cover of $G$. Let $\mathcal{M}(G)$ be the set of all minimal vertex covers of $G$. Then the \emph{(vertex) cover ideal} of $G$, denoted by $J(G)$ is generated by $x_{i_1}x_{i_2}\cdots x_{i_k}$ such that $\{x_{i_1},x_{i_2},\ldots ,x_{i_k}\} \in \mathcal{M}(G)$. It is a well-known fact that $J(G)$ is the Alexander dual of $I(G)$. Throughout this paper we will use a set of vertices $C$ interchangeably with its corresponding monomial $\prod_{x_i\in C}x_i$.

A graph is called \emph{chordal} if it has no induced cycles except triangles. Every chordal graph contains a \emph{simplicial vertex}, i.e., a vertex whose neighbors form a complete graph. The graph $G$ is called a {\em path} on $\{x_1, x_2, \ldots, x_n\}$ if $$E(G)=\{\{x_1,x_2\}, \{x_2,x_3\}, \ldots, \{x_{n-1},x_n\}\}.$$ We denote a path on $n$ vertices by $P_n$. Note that every path is a chordal graph. Our main goal is to prove that $J(P_n)^2$ has linear quotients. If $I$ is a monomial ideal, we denote by $G(I)$ the set of the minimal monomial generators of $I$. Recall that a monomial ideal $I$ is said to have {\em linear quotients} if  there exists a suitable order of the minimal generators $u_1,u_2, \ldots, u_m$ such that for all $2 \leq i \leq m$ the ideal $(u_1, \ldots, u_{i-1}):(u_i)$ is generated by variables. Given two monomials $u$ and $v$, we will use the notation $u:v$ for the monomial $u/\gcd(u,v)$.

 To simplify the notation in the following text, we set $uA:=\{ua: a \in A\}$, where $u$ is a  monomial in $ S$ and $A$ is a subset of $ S$. Similarly, if $A=a_1, \ldots, a_n$ is a list of elements of $S$, then $uA$ is a new list defined by $uA:=ua_1, \ldots, ua_n$. Note that we use a non-standard way to represent a list. Normally, one would write $A=(a_1, \ldots, a_n)$ but we will drop the parentheses to avoid possible confusion between lists and ideals. The following lemma gives the relation between $\mathcal{M}(P_n)$, $\mathcal{M}(P_{n-2})$ and $\mathcal{M}(P_{n-3})$, or equivalently, the minimal generators of $J(P_n), J(P_{n-2})$ and $J(P_{n-3})$.

\begin{Lemma}\label{lem: rooted order for first power}
	For all $n \geq 5$, we have \[
	 G(J(P_{n}))=x_{n-1}G(J(P_{n-2})) \sqcup\; x_nx_{n-2}G(J(P_{n-3})).\]
	 Moreover, if $u_1,\dots ,u_p$ and $v_1,\dots, v_q$ are the minimal generators of $J(P_{n-2})$ and $J(P_{n-3})$ respectively written in linear quotients order, then $J(P_n)$ has linear quotients with respect to the order $x_{n-1}u_1,\dots ,x_{n-1}u_p,  x_nx_{n-2}v_1,\dots,  x_nx_{n-2}v_q $.
\end{Lemma}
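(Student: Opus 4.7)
The plan is to treat the two claims essentially separately, with the real work concentrated in a single key step. For the decomposition, I would classify elements of $\mathcal{M}(P_n)$ by whether they contain $x_{n-1}$. Since $x_n$ has only one incident edge, minimality forces exactly one of two situations: either (i) $x_{n-1}\in C$ and $x_n\notin C$, or (ii) $x_{n-1}\notin C$ and $x_n, x_{n-2}\in C$. In case (i) the set $C\setminus\{x_{n-1}\}$ lies in $\{x_1,\dots,x_{n-2}\}$ and covers every edge of $P_{n-2}$; the one extra edge $\{x_{n-2},x_{n-1}\}$ that $x_{n-2}$ acquires in $P_n$ is already covered by $x_{n-1}$, so minimality in $P_{n-2}$ and minimality in $P_n$ coincide. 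Case (ii) is handled analogously with the pair $\{x_n,x_{n-2}\}$ and the path $P_{n-3}$, which gives the claimed disjoint union.

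Set $U_i=x_{n-1}u_i$ and $V_j=x_nx_{n-2}v_j$. The linear quotient property for the $U_i$'s is immediate: since $x_{n-1}$ divides every $U_i$, one has $(U_1,\dots,U_{i-1}):U_i=(u_1,\dots,u_{i-1}):u_i$, which is generated by variables by hypothesis. For $V_j$ I would write
\[
(U_1,\dots,U_p,V_1,\dots,V_{j-1}):V_j=\sum_{i=1}^p(U_i:V_j)+\sum_{k<j}(V_k:V_j).
\]
Cancelling the common factor $x_nx_{n-2}$ turns the second sum into $(v_1,\dots,v_{j-1}):v_j$, generated by variables by hypothesis. Each $U_i:V_j$ is principal, generated by $x_{n-1}$ times a squarefree monomial, and is therefore contained in $(x_{n-1})$. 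Consequently the colon ideal will be generated by variables as soon as one shows that $x_{n-1}$ itself lies in it.

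The key claim is that some $U_i$ divides $x_{n-1}V_j$; equivalently, that $\{x_{n-2}\}\cup\supp(v_j)$ contains a minimal vertex cover of $P_{n-2}$. I would split into two cases according to whether $x_{n-3}\in\supp(v_j)$. If it is, then the edge $\{x_{n-3},x_{n-2}\}$ is already covered by $x_{n-3}$, and a short check using minimality of $v_j$ in $P_{n-3}$ shows that $\supp(v_j)$ is itself a minimal vertex cover of $P_{n-2}$. If it is not, then $x_{n-2}$ is forced in, and $\supp(v_j)\cup\{x_{n-2}\}$ is minimal in $P_{n-2}$. In either case there is a $u_i$ with $u_i\mid x_{n-2}v_j$, so $U_i:V_j=(x_{n-1})$, and hence $x_{n-1}$ lies in the colon ideal. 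Combining this with the preceding paragraph, the colon ideal equals $(x_{n-1})+\bigl((v_1,\dots,v_{j-1}):v_j\bigr)$, generated by variables.

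The main obstacle is precisely this last minimality verification: the two candidate covers $\supp(v_j)$ and $\supp(v_j)\cup\{x_{n-2}\}$ work in complementary subcases, and one must check in each that no vertex becomes removable upon passing from $P_{n-3}$ to $P_{n-2}$. Everything else is bookkeeping with monomial colon ideals.
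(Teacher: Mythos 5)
Your argument is correct, but it is genuinely different in character from what the paper does: the paper's entire proof is a one-line citation of a general structure theorem for cover ideals of chordal graphs with a simplicial vertex (Theorem 3.1 of Erey, \emph{On the cover ideals of chordal graphs}), applied with $x_n$ as the simplicial vertex, whereas you reprove the special case for paths from scratch. Your case split on whether $x_{n-1}\in C$ correctly yields the disjoint decomposition, and your treatment of the colon ideals is sound: the block of $U_i$'s reduces to the hypothesis on $J(P_{n-2})$ after cancelling $x_{n-1}$, and for $V_j$ the identity $(U_1,\dots,U_p,V_1,\dots,V_{j-1}):V_j=\sum_i(U_i:V_j)+\sum_{k<j}(V_k:V_j)$ together with $U_i:V_j\subseteq(x_{n-1})$ reduces everything to exhibiting one $u_i$ dividing $x_{n-2}v_j$. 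One small remark: your ``key claim'' needs less care than you give it --- since $v_j$ covers $P_{n-3}$ and $x_{n-2}$ covers the edge $\{x_{n-3},x_{n-2}\}$, the set $\{x_{n-2}\}\cup\supp(v_j)$ is automatically a vertex cover of $P_{n-2}$ and hence \emph{contains} a minimal one, which is all you use; the two-subcase minimality verification (identifying which minimal cover it is) is not needed. This is exactly the content of the paper's later Lemma~\ref{lem:every b contains some a}, which is proved by the same one-line observation. What your route buys is a self-contained proof that does not lean on the external reference; what the paper's route buys is brevity and the fact that the same statement holds verbatim for any chordal graph with a simplicial vertex, which the authors exploit in their concluding remarks.
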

\begin{proof}
	Since $P_n$ is a chordal graph and $x_n$ is a simplicial vertex, the result follows from \cite[Theorem 3.1]{erey}.
\end{proof}

Based on Lemma~\ref{lem: rooted order for first power}, we define a recursive order on the generators of $J(P_n)$.
\begin{Definition}[\textbf{Rooted list, rooted order}]\label{def:rooted list} 

Let $P_n$ be the path with edge ideal $I(P_n)=(x_1x_2, x_2x_3,\dots ,x_{n-1}x_n)$.  
We define the {\em rooted list}, denoted by  $\MR(P_n)$, of minimal generators of $J(P_n)$ recursively as follows:
\begin{itemize}
\item  $\MR(P_2)=x_1, x_2$

\item  $\MR(P_3)=x_2,x_1x_3$

\item $\MR(P_4)=x_1x_3,x_2x_3, x_2x_4$

\item for $n\geq 5$, if $\MR(P_{n-2})=u_1,\dots , u_r$ and $\MR(P_{n-3})=v_1,\dots , v_s$ then
	\begin{equation*}\label{eq:rooted list}
	\MR(P_n)=x_{n-1}u_1,\dots ,x_{n-1}u_r, x_nx_{n-2}v_1,\dots ,x_nx_{n-2}v_s.
	\end{equation*}
\end{itemize}
We set $\MR(P_1)$ as an empty list. Moreover, we define a total order $>_\MR$ which we call a \emph{rooted order} of the minimal generators of $J(P_n)$ as follows: if $\MR(P_n)=w_1, \ldots , w_t$, then $w_i >_\MR w_j$ for $i<j$.
\end{Definition}

\begin{figure}[hbt!]
	\includegraphics[width = 15cm]{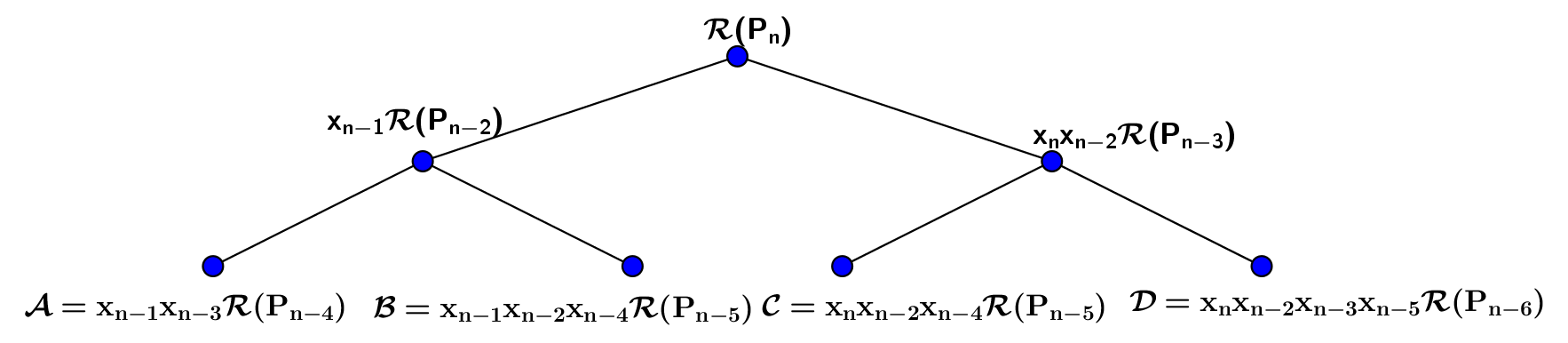}
	\caption{Branching of $\MR(P_n)$}
	\label{branching}
\end{figure}

\begin{Remark}\label{rk:rooted order linear quotients}
	Let $\MR(P_n)=u_1,\dots ,u_q$ for $n\geq 2$. Then Lemma~\ref{lem: rooted order for first power} together with definition of rooted list implies that $J(P_n)$ has linear quotients with respect to the order $u_1,\dots ,u_q$.
\end{Remark}

Let $\mathbf{a}=(a_1, \ldots ,a_n)$, $\mathbf{b}=(b_1, \ldots, b_n)$ be two elements in $\ZZ^n$. Then we say that $\mathbf{a}>_{\lex} \mathbf{b}$ if first non-zero entry in $\ab-\bb$ is positive. In the following definition we adopt the same terminology used in Discussion 4.1 in \cite{banerjee}.

\begin{Definition}[\textbf{$\mathbf{2}$-fold product, maximal expression}]
Let $I=(u_1,\dots ,u_q)$.	We say that $M=u_1^{a_1}\dots u_q^{a_q}$ is a {\em $2$-fold product} of minimal generators of $I$ if $a_i\geq 0$ and $a_1+\dots +a_q=2$. We write $u_1^{a_1}\dots u_q^{a_q}>_{lex} u_1^{b_1}\dots u_q^{b_q}$ if $(a_1,\dots ,a_q)>_{lex} (b_1,\dots ,b_q)$.
We say that $M=u_1^{a_1}\dots u_q^{a_q}$ is a {\em maximal expression} if $(a_1,\dots ,a_q)>_{lex} (b_1,\dots ,b_q)$ for any other $2$-fold product $M=u_1^{b_1}\dots u_q^{b_q}$. 
\end{Definition}

\begin{Notation}
For a monomial ideal $I$ we set $F(I^2)=\{uv : u, v \in G(I)\}$.
\end{Notation}

 Note that for an arbitrary monomial ideal $I$, while $G(I^2)\subseteq F(I^2)$, not every $2$-fold product is a minimal generator of $I^2$. However, if $I$ is generated by the same degree monomials, in particular if $I$ is an edge ideal, then $G(I^2)=F(I^2)$.


\begin{Definition}[\textbf{Rooted order on the second power}]\label{def:rooted for second power} Let $\MR (P_n)=u_1,\dots ,u_q$ for $n\geq 2$. We define a total order $>_{\MR}$ on $F(J(P_n)^2)$ which we call \emph{rooted order} as follows. For $M,N\in F(J(P_n)^2)$ with maximal expressions $M=u_1^{a_1}\dots u_q^{a_q}$ and $N=u_1^{b_1}\dots u_q^{b_q}$ we set $M>_{\MR} N$ if $(a_1,\dots ,a_q)>_{lex} (b_1,\dots ,b_q)$. 

Let $G(J(P_n)^2)=\{U_1, U_2, \ldots, U_s\}$. Then we say $U_1,U_2,\dots ,U_s$ is a \emph{rooted list} of generators of $J(P_n)^2$  if $U_1>_{\MR}>U_2>_{\MR}\ldots >_{\MR} U_s$. In such case, we denote the rooted list of generators by $\MR(J(P_n)^2)=U_1,U_2, \ldots, U_s.$ 
\end{Definition}


The following table shows the rooted list $\mathcal{R}(P_n)$ for $2 \leq n \leq 7$. \\
\begin{table}[ht]
\caption{}
\centering
\begin{tabular}{ |p{1cm}|p{12.8cm} | }

 \hline
$n$& $\mathcal{R}(P_n)$ \\

\hline
2 & $u_1=x_1,u_2=x_2$\\
\hline
3 & $u_1=x_2,u_2=x_1x_3$\\
\hline

4 &  $u_1=x_1x_3,u_2=x_2x_3, u_3=x_2x_4$\\

\hline
5& $u_1=x_2x_4,u_2=x_1x_3x_4, u_3=x_1x_3x_5, u_4=x_2x_3x_5$     \\
\hline

6 &  $u_1=x_1x_3x_5,u_2=x_2x_3x_5, u_3=x_2x_4x_5, u_4=x_2x_4x_6, u_5=x_1x_3x_4x_6$\\

\hline

7& $u_1=x_2x_4x_6,u_2=x_1x_3x_4x_6, u_3=x_1x_3x_5x_6, u_4=x_2x_3x_5x_6$, \\

&$u_5=x_1x_3x_5x_7,u_6=x_2x_3x_5x_7, u_7=x_2x_4x_5x_7$     \\
\hline

\end{tabular}
 \label{Table1}
\end{table}

\medskip
Given above labelling of elements of $\mathcal{R}(P_n)$, for $2 \leq n \leq 7$, Table~\ref{Table2} shows the rooted list of the minimal generators of $J(P_n)^2$, and the 2-fold products in $F(J(P_n)^2)\setminus G(J(P_n)^2)$. 
\\
\begin{table}[ht]
\caption{}
\centering
\begin{tabular}{ |p{0.4cm}|p{7cm}|p{6.1cm}|  }

 \hline
$n$& $\mathcal{R}(J(P_n)^2)$ & $F(J(P_n)^2)\setminus G(J(P_n)^2)$\\

 \hline
 2&  $u_1^2, u_1u_2, u_2^2$&  \\
 \hline
 3&  $u_1^2, u_1u_2, u_2^2$&  \\

 \hline
 4 &  $u_1^2, u_1u_2, u_1u_3, u_2^2, u_2u_3, u_3^2$ &  \\
 \hline
 
 5 &  $u_1^2, u_1u_2, u_1u_3, u_1u_4, u_2^2, u_2u_3, u_3^2,u_3u_4, u_4^2$ & $u_2u_4$ (divisible by $u_1u_3$) \\
 \hline
 
  6 &  $u_1^2, u_1u_2, u_1u_3, u_1u_4, u_1u_5, u_2^2, u_2u_3,$ & $ u_2u_5, u_3u_5$ (divisible by $u_1u_4$)\\

    &   $u_2u_4, u_3^2,u_3u_4, u_4^2, u_4u_5, u_5^2$ &  \\
 \hline
 
  7 &  $u_1^2, \, u_1u_2,\, u_1u_3, \, u_1u_4, \, u_1u_5, \,  u_1u_6,\,  u_1u_7, $ & $u_2u_4$ (divisible by $u_1u_3$) \\
  
     &  $ u_2^2, \, u_2u_3, \, u_2u_5, \, u_3^2, \, u_3u_4, \, u_3u_5,$ & $u_2u_6, u_2u_7, u_3u_7$ (divisible by $u_1u_5$) \\
       &  $u_3u_6=u_4u_5,$ & $u_4u_7$ (divisible by $u_1u_6$) \\
      & $  u_4^2, \, u_4u_6, \, u_5^2, \, u_5u_6, \, u_5u_7, \, u_6^2, \, u_6u_7, \, u_7^2$ & \\
  
 \hline
\end{tabular}
\label{Table2}
\end{table}


\section{Some properties of $G(J(P_n))$, $G(J(P_n)^2)$ and rooted lists}\label{sec:properties of generators and rooted lists}

In this section we will prove some technical results about properties of rooted lists. We start with some observations:

\begin{Remark}\label{rk:max expression preserved P_{n-2}}
	Let $n\geq 4$ and let $\MR(P_{n-2})=u_1,\dots ,u_m$. Observe that by definition of rooted order for any $k,\ell \in\{1,\dots ,m\}$ we have
	\[u_k>_{\MR} u_\ell \,\text{ in } \MR(P_{n-2}) \Longleftrightarrow x_{n-1}u_k>_{\MR} x_{n-1}u_\ell \, \text{ in } \MR(P_n).\]
	
	Therefore $u_iu_j$ is a maximal expression with $i\leq j$ if and only if $(x_{n-1}u_i)(x_{n-1}u_j)$ is a maximal expression with $x_{n-1}u_i \geq_{\MR} x_{n-1}u_j$.
\end{Remark}

\begin{Remark}\label{rk:max expression preserved P_{n-3}}
	Let $n\geq 5$ and let $\MR(P_{n-3})=u_1,\dots ,u_m$. Observe that by definition of rooted order for any $k,\ell \in\{1,\dots ,m\}$ we have
	\[u_k>_{\MR} u_\ell \,\text{ in } \MR(P_{n-3}) \Longleftrightarrow x_nx_{n-2}u_k>_{\MR} x_nx_{n-2}u_\ell \, \text{ in } \MR(P_n).\]
	
	Therefore $u_iu_j$ is a maximal expression with $i\leq j$ if and only if $(x_nx_{n-2}u_i)(x_nx_{n-2}u_j)$ is a maximal expression with $x_nx_{n-2}u_i \geq_{\MR} x_nx_{n-2}u_j$.
\end{Remark}

\begin{Lemma}\label{lem: order preserved for P_n-2}
	Let $n\geq 4$. Then $U,V\in F(J(P_{n-2})^2)$ if and only if $x_{n-1}^2U, x_{n-1}^2V\in F(J(P_n)^2)$. Moreover, in such case $U>_{\MR} V$ if and only if $x_{n-1}^2U>_{\MR} x_{n-1}^2V$.
\end{Lemma}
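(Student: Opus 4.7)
The plan is to split the lemma into its two assertions and reduce each to facts about $G(J(P_n))$ coming from Lemma~\ref{lem: rooted order for first power}, combined with Remark~\ref{rk:max expression preserved P_{n-2}} for the lexicographic half.

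To handle the set-theoretic equivalence, I would first dispatch the forward direction, which is immediate: given $U = u u'$ with $u,u' \in G(J(P_{n-2}))$, Lemma~\ref{lem: rooted order for first power} gives $x_{n-1} u,\, x_{n-1} u' \in G(J(P_n))$, so $x_{n-1}^2 U \in F(J(P_n)^2)$. For the reverse direction I would suppose $x_{n-1}^2 U = w w'$ with $w,w' \in G(J(P_n))$ and apply Lemma~\ref{lem: rooted order for first power} to write each factor either as $x_{n-1} a$ with $a \in G(J(P_{n-2}))$ or as $x_n x_{n-2} b$ with $b \in G(J(P_{n-3}))$. Since $a$ is a monomial in $x_1,\ldots,x_{n-2}$ and $x_n x_{n-2} b$ is a monomial in $x_1,\ldots,x_{n-3},x_{n-2},x_n$, neither contributes any additional factor of $x_{n-1}$. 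The $x_{n-1}$-exponent of each generator is therefore at most $1$, so to match the total exponent $2$ appearing in $x_{n-1}^2 U$ both $w$ and $w'$ must lie in $x_{n-1}G(J(P_{n-2}))$. Cancelling $x_{n-1}^2$ gives $U \in F(J(P_{n-2})^2)$, and the same reasoning applies to $V$.

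For the order-preservation claim I would write $\MR(P_{n-2}) = u_1,\ldots,u_m$ and take maximal expressions $U = \prod u_i^{a_i}$ and $V = \prod u_i^{b_i}$. By Remark~\ref{rk:max expression preserved P_{n-2}}, the maximal expressions of $x_{n-1}^2 U$ and $x_{n-1}^2 V$ in $F(J(P_n)^2)$ are $\prod (x_{n-1} u_i)^{a_i}$ and $\prod (x_{n-1} u_i)^{b_i}$, involving only the first $m$ entries of $\MR(P_n)$. The corresponding lex vectors in $\ZZ^{|\MR(P_n)|}$ are then $(a_1,\ldots,a_m,0,\ldots,0)$ and $(b_1,\ldots,b_m,0,\ldots,0)$, so comparing them lexicographically is equivalent to comparing $(a_1,\ldots,a_m)$ with $(b_1,\ldots,b_m)$. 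This gives $U>_{\MR} V$ if and only if $x_{n-1}^2 U >_{\MR} x_{n-1}^2 V$. The only real care is in the exponent bookkeeping for $x_{n-1}$ in each type of generator described by Lemma~\ref{lem: rooted order for first power}, and I do not anticipate a genuine obstacle beyond that.
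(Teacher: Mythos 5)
Your proof is correct and follows essentially the same route as the paper: the set-theoretic equivalence is reduced to Lemma~\ref{lem: rooted order for first power} (you supply the short $x_{n-1}$-exponent argument that the paper leaves as ``clear''), and the order statement is deduced from Remark~\ref{rk:max expression preserved P_{n-2}}, with your exponent-vector comparison being just a reformulation of the paper's case analysis on $u_i>_{\MR}u_s$ versus $u_i=u_s$ and $u_j>_{\MR}u_t$.
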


\begin{proof}
	The first statement is clear by Lemma~\ref{lem: rooted order for first power}.  Let $\MR(P_{n-2})=u_1,\dots ,u_m$. Suppose that $U=u_iu_j$ and $V=u_su_t$ are maximal expressions where $i\leq j$ and $s\leq t$. Then by Remark~\ref{rk:max expression preserved P_{n-2}} it follows that $x_{n-1}^2U=(x_{n-1}u_i)(x_{n-1}u_j)$ and $x_{n-1}^2V=(x_{n-1}u_s)(x_{n-1}u_t)$ are maximal expressions where $x_{n-1}u_i \geq_{\MR} x_{n-1}u_j$ and $x_{n-1}u_s \geq_{\MR} x_{n-1}u_t$. Keeping Remark~\ref{rk:max expression preserved P_{n-2}} in mind observe that
	
	\begin{equation*}
	\begin{split}
	U>_{\MR} V& \Longleftrightarrow \text{ either } u_i>_{\MR} u_s \text{ or } u_i=u_s \text{ and } u_j>_{\MR} u_t  \\
	& \Longleftrightarrow \text{ either } x_{n-1}u_i>_{\MR} x_{n-1}u_s \text{ or } x_{n-1}u_i=x_{n-1}u_s \text{ and } x_{n-1}u_j>_{\MR} x_{n-1}u_t  \\
	& \Longleftrightarrow x_{n-1}^2U>_{\MR} x_{n-1}^2V.
	\end{split}
	\end{equation*} 
\end{proof}

\begin{Lemma}\label{lem: order preserved for P_n-3}
	Let $n\geq 5$. Then $U,V\in F(J(P_{n-3})^2)$ if and only if both $x_n^2x_{n-2}^2U$ and $x_n^2x_{n-2}^2V$ belong to $F(J(P_n)^2)$. Moreover, in such case $U>_{\MR} V$ if and only if $x_n^2x_{n-2}^2U>_{\MR}x_n^2 x_{n-2}^2V$.
\end{Lemma}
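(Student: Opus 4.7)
The plan is to mirror the proof of Lemma~3.3 almost verbatim, interchanging the role of $x_{n-1}$ (applied to generators coming from $P_{n-2}$) with that of $x_n x_{n-2}$ (applied to generators coming from $P_{n-3}$), and invoking Remark~3.2 in place of Remark~3.1.

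For the first equivalence, I would use Lemma~\ref{lem: rooted order for first power}, which tells us that $G(J(P_n)) = x_{n-1}G(J(P_{n-2})) \sqcup x_n x_{n-2} G(J(P_{n-3}))$. The forward direction is immediate: if $U = u_i u_j \in F(J(P_{n-3})^2)$ with $u_i, u_j \in G(J(P_{n-3}))$, then $x_n x_{n-2} u_i$ and $x_n x_{n-2} u_j$ both lie in $G(J(P_n))$, so $x_n^2 x_{n-2}^2 U \in F(J(P_n)^2)$, and similarly for $V$. The converse requires a degree observation: since $P_{n-2}$ has vertex set $\{x_1, \dots, x_{n-2}\}$, no generator in $x_{n-1}G(J(P_{n-2}))$ involves $x_n$. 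As $x_n^2 x_{n-2}^2 U$ contains $x_n^2$ but is a product of two elements of $G(J(P_n))$, each factor must involve $x_n$, hence each factor must lie in $x_n x_{n-2} G(J(P_{n-3}))$. Canceling $x_n x_{n-2}$ from both factors gives $U \in F(J(P_{n-3})^2)$.

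For the order-preservation part, let $\MR(P_{n-3}) = u_1, \dots, u_m$ and write the maximal expressions $U = u_i u_j$ with $i \leq j$ and $V = u_s u_t$ with $s \leq t$. By Remark~\ref{rk:max expression preserved P_{n-3}}, the products $x_n^2 x_{n-2}^2 U = (x_n x_{n-2} u_i)(x_n x_{n-2} u_j)$ and $x_n^2 x_{n-2}^2 V = (x_n x_{n-2} u_s)(x_n x_{n-2} u_t)$ are themselves maximal expressions in $F(J(P_n)^2)$, with $x_n x_{n-2} u_i \geq_{\MR} x_n x_{n-2} u_j$ and $x_n x_{n-2} u_s \geq_{\MR} x_n x_{n-2} u_t$. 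Then the chain of equivalences
\begin{equation*}
\begin{split}
U >_{\MR} V &\Longleftrightarrow u_i >_{\MR} u_s, \text{ or } u_i = u_s \text{ and } u_j >_{\MR} u_t \\
&\Longleftrightarrow x_n x_{n-2} u_i >_{\MR} x_n x_{n-2} u_s, \text{ or equality with } x_n x_{n-2} u_j >_{\MR} x_n x_{n-2} u_t \\
&\Longleftrightarrow x_n^2 x_{n-2}^2 U >_{\MR} x_n^2 x_{n-2}^2 V
\end{split}
\end{equation*}
follows directly from Remark~\ref{rk:max expression preserved P_{n-3}} applied to the single-factor comparisons.

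The only subtlety, and the one worth stating carefully, is the converse half of the first equivalence: one must exclude the possibility that $x_n^2 x_{n-2}^2 U$ factors in $G(J(P_n))$ as a product with one factor from $x_{n-1} G(J(P_{n-2}))$ and the other from $x_n x_{n-2} G(J(P_{n-3}))$. The $x_n$-degree argument sketched above handles this cleanly, and no other genuine obstacle appears.
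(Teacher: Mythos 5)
Your proof is correct and follows essentially the same route as the paper, which simply remarks that the argument of the preceding lemma carries over verbatim with Remark~\ref{rk:max expression preserved P_{n-3}} in place of Remark~\ref{rk:max expression preserved P_{n-2}}. Your explicit squarefreeness/degree argument for the converse of the first equivalence is a welcome elaboration of what the paper leaves as ``clear by Lemma~\ref{lem: rooted order for first power}.''
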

\begin{proof}
	The proof is almost identical to that of previous lemma if one uses Remark~\ref{rk:max expression preserved P_{n-3}} instead of Remark~\ref{rk:max expression preserved P_{n-2}}.
\end{proof}

\begin{Lemma}\label{lem:every b contains some a}
	Let $n\geq 4$ and let $u \in G(J(P_n)) $ such that $x_n | u$. Then there exists $v \in  G(J(P_{n-2}))$ such that $v$ divides $u/x_n$.
\end{Lemma}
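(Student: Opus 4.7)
My plan is to translate the divisibility statement into the language of minimal vertex covers, use the fact that $x_n$ is a simplicial (in fact, leaf) vertex of $P_n$ to pin down the local structure of any minimal cover of $P_n$ containing $x_n$, and then exhibit the required $v$ as the monomial of a minimal cover of $P_{n-2}$ sitting inside $C\setminus\{x_n\}$.

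First I would write $u=\prod_{x_i\in C}x_i$ for a unique minimal vertex cover $C\in\MM(P_n)$, with $x_n\in C$ by hypothesis. The first key observation is that $x_{n-1}\notin C$: the only edge of $P_n$ incident to $x_n$ is $\{x_{n-1},x_n\}$, so if $x_{n-1}\in C$ as well, then $C\setminus\{x_n\}$ would still cover every edge of $P_n$, contradicting minimality of $C$. The second observation is dual: since the edge $\{x_{n-2},x_{n-1}\}$ must be covered and $x_{n-1}\notin C$, we must have $x_{n-2}\in C$.

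Now set $C':=C\setminus\{x_n\}$. By what we just established, $C'\subseteq\{x_1,\ldots,x_{n-2}\}$. Every edge of $P_{n-2}$ is also an edge of $P_n$ and does not involve $x_{n-1}$ or $x_n$, so it must be covered by some element of $C$ that lies in $\{x_1,\ldots,x_{n-2}\}$, that is, by an element of $C'$. Hence $C'$ is a (possibly non-minimal) vertex cover of $P_{n-2}$. Shrinking $C'$ to any minimal cover $C''\subseteq C'$ of $P_{n-2}$ and setting $v:=\prod_{x_i\in C''}x_i$, we obtain $v\in G(J(P_{n-2}))$ with $v\mid \prod_{x_i\in C'}x_i=u/x_n$, as required.

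I do not anticipate a real obstacle here: the whole lemma is essentially the observation that $x_n$ being a leaf of $P_n$ rigidly forces $x_{n-1}\notin C$ and $x_{n-2}\in C$ for any minimal cover $C$ containing $x_n$, after which restriction to $P_{n-2}$ is automatic.
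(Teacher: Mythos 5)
Your proof is correct and follows essentially the same route as the paper: minimality of the cover forces $x_{n-1}\notin C$, so $C\setminus\{x_n\}$ covers $P_{n-2}$ and hence contains a minimal cover of $P_{n-2}$. The extra observation that $x_{n-2}\in C$ is true but not needed; everything already follows from $x_{n-1}\notin C$.
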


\begin{proof}
	If $x_n |u$, then $u$ is not divisible by $x_{n-1}$ because $u$ is a minimal vertex cover of $P_n$. Then $u/x_n$ contains a minimal vertex cover of $P_{n-2}$ which verifies the statement. 
\end{proof}
\begin{Lemma}\label{lem: colon contains the second variable from the end}
	Let $u_1,\dots ,u_r$ be the rooted list of $P_n$. Let $k$ be the smallest index such that $x_{n-1}\nmid u_k$. Then $(u_1,\dots ,u_{k-1}):(u_k)=(x_{n-1})$. Also if $i>k$, then
	$$(u_1,\dots ,u_{i-1}):(u_i)=(x_{n-1})+(u_k,\dots ,u_{i-1}):(u_i).  $$
\end{Lemma}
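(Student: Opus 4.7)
The plan is to leverage the block structure of $\MR(P_n)$ given by Lemma~\ref{lem: rooted order for first power}. The rooted list splits into two consecutive blocks: a first block of generators $x_{n-1}u$ with $u \in G(J(P_{n-2}))$, all of which are divisible by $x_{n-1}$, and a second block of generators $x_nx_{n-2}v$ with $v \in G(J(P_{n-3}))$, none of which are divisible by $x_{n-1}$ (since a minimal vertex cover containing $x_n$ cannot also contain $x_{n-1}$). Consequently, the smallest index $k$ with $x_{n-1} \nmid u_k$ is precisely the first index of the second block, and in particular $x_n \mid u_i$ for every $i \geq k$.

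Given this description, I would prove the first identity as follows. The containment $(u_1,\dots,u_{k-1}):(u_k) \subseteq (x_{n-1})$ is immediate from the block structure: for each $j < k$, the variable $x_{n-1}$ divides $u_j$ but not $u_k$, so $x_{n-1}$ divides the monomial $u_j : u_k$. For the reverse containment, I would apply Lemma~\ref{lem:every b contains some a} to $u_k$ (using $x_n \mid u_k$) to produce $v \in G(J(P_{n-2}))$ with $v \mid u_k/x_n$. Then $x_{n-1}v$ lies in the first block of $\MR(P_n)$, so $x_{n-1}v = u_j$ for some $j < k$, and $u_j$ divides $x_{n-1}(u_k/x_n)$, which divides $x_{n-1}u_k$. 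Hence $x_{n-1} \in (u_j) : (u_k)$, as required.

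For the second identity, with $i > k$, the containment $\supseteq$ is easy: the inclusion $(u_k,\dots,u_{i-1}):(u_i) \subseteq (u_1,\dots,u_{i-1}):(u_i)$ is trivial, and $x_{n-1}$ lies in the larger colon by running exactly the same application of Lemma~\ref{lem:every b contains some a} to $u_i$ in place of $u_k$ (the hypothesis $i > k$ guarantees $x_n \mid u_i$). For $\subseteq$, I would invoke Remark~\ref{rk:rooted order linear quotients}, which tells us that $(u_1,\dots,u_{i-1}):(u_i)$ is generated by variables, so it suffices to place every such generating variable $x_\ell$ in the right-hand side. A variable $x_\ell$ lying in the colon means some $u_j$ with $j < i$ divides $x_\ell u_i$; if $j < k$ then $x_{n-1} \mid u_j \mid x_\ell u_i$ combined with $x_{n-1} \nmid u_i$ forces $x_\ell = x_{n-1}$, while if $k \leq j < i$ then $x_\ell \in (u_j):(u_i) \subseteq (u_k,\dots,u_{i-1}):(u_i)$. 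The only nontrivial step is the appeal to Lemma~\ref{lem:every b contains some a} to exhibit a concrete witness in the first block; everything else is bookkeeping on divisibility, using that $x_{n-1}$ and $x_n$ never co-occur in a minimal vertex cover of $P_n$.
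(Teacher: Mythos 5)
Your proof is correct and follows essentially the same route as the paper: the paper's own proof is a one-line reduction to Lemma~\ref{lem:every b contains some a}, and your argument is exactly the expansion of that reduction (use the lemma to produce a witness $x_{n-1}v$ in the first block with $(x_{n-1}v):u_i=x_{n-1}$, and observe that every $u_j$ with $j<k$ contributes a colon generator divisible by $x_{n-1}$). The only cosmetic point is that the block decomposition you quote from Lemma~\ref{lem: rooted order for first power} is stated for $n\geq 5$, so the cases $n\leq 4$ should be checked against the explicit lists, just as the paper does for $n=2,3$.
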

\begin{proof}
	The statement is clear when $n=2$ or $n=3$. Otherwise it follows from Lemma~\ref{lem:every b contains some a}.
\end{proof}
\begin{Lemma}\label{lem:properties of rooted list}

	Let $\MR(P_n)=u_1,\dots ,u_m$.
	\begin{enumerate}
	\item If $x_n\mid u_i$ for some $i$, then $x_n\mid u_j$ for all $j\geq i$.
		\item If $x_{n-2}\mid u_i$ for some $i$, then $x_{n-2}\mid u_j$ for all $j\geq i$.
		\item If $\MR(P_{n-2})=v_1,\dots ,v_k$ and $\MR(P_{n-1})=w_1,\dots ,w_{\ell}$, then
		$$\MR(P_n)=x_{n-1}v_1,\dots, x_{n-1}v_k,x_nw_1,\dots ,x_nw_\alpha $$
		for some $\alpha <\ell$.
	\end{enumerate}
\end{Lemma}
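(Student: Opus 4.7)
The plan is to exploit the recursive decomposition $\MR(P_n) = x_{n-1}\MR(P_{n-2}), x_nx_{n-2}\MR(P_{n-3})$, valid for $n \geq 5$, and to handle the small cases by direct inspection of the explicitly given lists $\MR(P_2)$, $\MR(P_3)$, $\MR(P_4)$.

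For part (1), the observation is that elements of $\MR(P_{n-2})$ are monomials in $x_1,\dots,x_{n-2}$, so $x_n$ divides no element of the first block $x_{n-1}\MR(P_{n-2})$, while $x_n$ divides every element of the second block $x_nx_{n-2}\MR(P_{n-3})$. Hence if $x_n \mid u_i$, then $u_i$ already belongs to the second block, and so does every $u_j$ with $j \geq i$. For $n \leq 4$ the statement is immediate from the explicit lists.

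For part (2), I would induct on $n$, verifying $n = 2,3,4$ directly. In the inductive step, every element of the second block is automatically divisible by $x_{n-2}$, so only the first block needs analysis; there $x_{n-2} \mid x_{n-1}v$ iff $x_{n-2} \mid v$, and the inductive hypothesis for (2) applied to $\MR(P_{n-2})$ yields the required monotonicity within the first block. Combining the two blocks finishes the inductive step.

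For part (3), the crux is to identify $x_{n-2}\MR(P_{n-3})$ as an initial segment of $\MR(P_{n-1})$, for then the second block $x_nx_{n-2}\MR(P_{n-3})$ of $\MR(P_n)$ becomes $x_n$ times an initial segment of $\MR(P_{n-1})$, which is precisely the asserted form with $\alpha = |\MR(P_{n-3})|$. For $n \geq 6$ this is immediate from applying the recursion to $\MR(P_{n-1})$, giving $\MR(P_{n-1}) = x_{n-2}\MR(P_{n-3}), x_{n-1}x_{n-3}\MR(P_{n-4})$. For $n = 5$, since $\MR(P_4)$ is defined explicitly rather than recursively, I would verify by hand that the first two entries of $\MR(P_4) = x_1x_3, x_2x_3, x_2x_4$ coincide with $x_3\MR(P_2) = x_1x_3, x_2x_3$. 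The strict inequality $\alpha < \ell$ then follows because $|\MR(P_{n-1})| = |\MR(P_{n-3})| + |\MR(P_{n-4})|$ with $\MR(P_{n-4})$ nonempty for $n \geq 6$, and directly from $\alpha = 2 < 3 = \ell$ for $n = 5$. The main obstacle I foresee is precisely this separate treatment of $n = 5$ in part (3): since the recursion begins at $n = 5$, it cannot be applied to $\MR(P_4)$, so the initial-segment property for $\MR(P_4)$ must be established by a short direct computation rather than by the same inductive unfolding.
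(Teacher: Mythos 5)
Your argument follows the paper's proof almost verbatim: part (1) is read off from the two-block decomposition, part (2) reduces to the first block, and part (3) is obtained by unfolding the recursion one step further on $\MR(P_{n-1})$ to exhibit $x_{n-2}\MR(P_{n-3})$ as its initial segment, exactly as in the paper (which likewise relegates the small cases to direct inspection of Table~\ref{Table1}).

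The one genuine flaw is the citation in part (2). The first block of $\MR(P_n)$ is $x_{n-1}\MR(P_{n-2})$, and what you need there is that divisibility by $x_{n-2}$ is monotone along the list $\MR(P_{n-2})$. Since $x_{n-2}$ is the \emph{last} vertex of $P_{n-2}$, this is statement (1) applied to $P_{n-2}$, not statement (2): statement (2) for $P_{n-2}$ concerns the variable $x_{(n-2)-2}=x_{n-4}$ and says nothing about $x_{n-2}$. As written, your inductive hypothesis therefore does not deliver the required monotonicity in the first block. The repair is immediate --- invoke part (1) for $P_{n-2}$, which you have already established for all $n$ without any induction; the induction on $n$ in part (2) then becomes superfluous, and this is precisely the route the paper takes.
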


\begin{proof}
	$(1)$ follows from the definition of rooted list. 
	
	$(2)$ can be confirmed by applying $(1)$ to $P_{n-2}$ in the recursive definition of $\MR(P_n)$.
	
	To see $(3)$ we will just compare the recursively defined lists of $P_{n-1}$ and $P_n$. For $n\leq 5$, one can refer to Table~\ref{Table1} to confirm the statement holds. So, let us assume that $n\geq 6$. Let $\MR(P_{n-4})=y_1,\dots, y_s$ and let $\MR(P_{n-3})=z_1,\dots,z_t$. Then by recursive definition of rooted list we have
	\[\MR(P_n)=x_{n-1}v_1,\dots, x_{n-1}v_k, x_nx_{n-2}z_1,\dots ,x_nx_{n-2}z_t \] and 
	  	\[\MR(P_{n-1})=x_{n-2}z_1,\dots, x_{n-2}z_t, x_{n-1}x_{n-3}y_1,\dots ,x_{n-1}x_{n-3}y_s. \]
	  	Therefore $w_1=x_{n-2}z_1,\dots ,w_t=x_{n-2}z_t$ and $\alpha=t$ as desired.
\end{proof}

	\begin{Lemma}\label{lem:product of all As}
		Let $n\geq 4$. Then $x_{n-1}^2U\in G(J(P_n)^2)$ if and only if $U\in G(J(P_{n-2})^2)$.	  
\end{Lemma}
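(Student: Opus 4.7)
The plan is to reduce the statement to the decomposition given in Lemma~\ref{lem: rooted order for first power}, which expresses every minimal generator of $J(P_n)$ either in the form $x_{n-1}w$ with $w \in G(J(P_{n-2}))$ or in the form $x_nx_{n-2}w$ with $w \in G(J(P_{n-3}))$. Note that these two families are disjoint: a generator of the first type is divisible by $x_{n-1}$ but not by $x_n$, while one of the second type is divisible by $x_n$ but not by $x_{n-1}$, because every generator of $J(P_n)$ is squarefree and must meet the edge $\{x_{n-1},x_n\}$ in exactly one endpoint.

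For the forward implication, suppose $x_{n-1}^2U \in G(J(P_n)^2)$. Then $x_{n-1}^2U = pq$ with $p,q \in G(J(P_n))$. Since $p$ and $q$ are squarefree, each contributes the variable $x_{n-1}$ with multiplicity at most one, so the divisibility $x_{n-1}^2 \mid pq$ forces $x_{n-1}$ to divide both $p$ and $q$. By the decomposition above, we must have $p = x_{n-1}u_1$ and $q = x_{n-1}u_2$ with $u_i \in G(J(P_{n-2}))$, giving $U = u_1u_2 \in F(J(P_{n-2})^2)$. To upgrade this to minimality, I would argue by contradiction: if some $U' \in G(J(P_{n-2})^2)$ properly divided $U$, then writing $U' = v_1v_2$ with $v_i \in G(J(P_{n-2}))$ produces $x_{n-1}^2U' = (x_{n-1}v_1)(x_{n-1}v_2) \in F(J(P_n)^2)$, a proper divisor of $x_{n-1}^2U$ inside $J(P_n)^2$, contradicting minimality.

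For the reverse implication, write $U = u_1u_2$ with $u_i \in G(J(P_{n-2}))$, so that $x_{n-1}^2U = (x_{n-1}u_1)(x_{n-1}u_2) \in F(J(P_n)^2) \subseteq J(P_n)^2$. The key observation is that the monomial $x_{n-1}^2U$ involves only variables among $x_1,\ldots,x_{n-1}$ because $u_1,u_2 \in G(J(P_{n-2}))$. Hence any $W \in G(J(P_n)^2)$ dividing $x_{n-1}^2U$ satisfies $x_n \nmid W$; writing $W = pq$ with $p,q \in G(J(P_n))$, the disjointness noted in the first paragraph forces $p,q \in x_{n-1}G(J(P_{n-2}))$, so that $W = x_{n-1}^2U'$ with $U' \in F(J(P_{n-2})^2) \subseteq J(P_{n-2})^2$. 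If $W$ properly divided $x_{n-1}^2U$, then $U'$ would properly divide $U$, contradicting $U \in G(J(P_{n-2})^2)$.

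The main (mild) obstacle is just being careful about the translation of minimality between the two ideals. The step that does the real work is the squarefree/disjointness observation that rules out any contribution from the $x_nx_{n-2}G(J(P_{n-3}))$ branch, once we know $x_{n-1}^2$ divides the product or, dually, that $x_n$ does not divide it. Everything else is bookkeeping driven by Lemma~\ref{lem: rooted order for first power}.
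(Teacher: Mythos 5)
Your proof is correct and takes essentially the same approach as the paper: both directions rest on the decomposition $G(J(P_{n}))=x_{n-1}G(J(P_{n-2})) \sqcup x_nx_{n-2}G(J(P_{n-3}))$ from Lemma~\ref{lem: rooted order for first power} and then transfer minimality back and forth between $U$ and $x_{n-1}^2U$ by exhibiting a proper divisor on one side whenever one exists on the other. The only cosmetic difference is that you justify the correspondence between $F(J(P_{n-2})^2)$ and $x_{n-1}^2$-divisible elements of $F(J(P_n)^2)$ directly from squarefreeness and the disjointness of the two branches, whereas the paper cites Lemma~\ref{lem: order preserved for P_n-2} for the same fact.
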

\begin{proof}
	($\Rightarrow$) Suppose that $x_{n-1}^2U\in G(J(P_n)^2)$. Then $x_{n-1}^2U=(x_{n-1}u_1)(x_{n-1}u_2)$ for some $u_1,u_2\in G(J(P_{n-2}))$ by Lemma~\ref{lem: rooted order for first power}. Let $V=v_1v_2$ for some $v_1,v_2\in G(J(P_{n-2}))$ such that $V|U$. Then by Lemma~\ref{lem: order preserved for P_n-2} we have $W=(x_{n-1}v_1)(x_{n-1}v_2)\in F(J(P_n)^2)$. Since $W|x_{n-1}^2U$ and $x_{n-1}^2U$ is a minimal generator we get $W=x_{n-1}^2U$. Therefore $V=U$ and $U$ is a minimal generator of $J(P_{n-2})^2$.
	
	($\Leftarrow$) Let $U\in G(J(P_{n-2})^2)$. Then $U=u_1u_2$ for some $u_1,u_2\in G(J(P_{n-2}))$. Then by Lemma~\ref{lem: order preserved for P_n-2} we have $x_{n-1}^2U=(x_{n-1}u_1)(x_{n-1}u_2)\in F(J(P_n)^2)$. Let $V\in G(J(P_n)^2)$ such that $V|x_{n-1}^2U$. By Lemma~\ref{lem: rooted order for first power} one can write $V=(x_{n-1}v_1)(x_{n-1}v_2)$ for some $v_1,v_2\in G(J(P_{n-2}))$. Then we get $v_1v_2|u_1u_2$. Since $U$ is a minimal generator, $v_1v_2=u_1u_2$. Therefore $V=x_{n-1}^2U\in G(J(P_n)^2)$ as desired.
\end{proof}
\begin{Lemma}\label{lem:product of all Bs}
		Let $n\geq 5$. Then $x_n^2x_{n-2}^2U\in G(J(P_n)^2)$ if and only if $U\in G(J(P_{n-3})^2)$.	
\end{Lemma}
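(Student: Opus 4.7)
The proof should parallel Lemma~\ref{lem:product of all As} closely, substituting Lemma~\ref{lem: order preserved for P_n-3} for Lemma~\ref{lem: order preserved for P_n-2} and using the second piece of the decomposition in Lemma~\ref{lem: rooted order for first power}. The underlying structural fact I want to exploit is that generators of $J(P_n)$ split into two types: those of the form $x_{n-1}v$ with $v \in G(J(P_{n-2}))$ (which are not divisible by $x_n$), and those of the form $x_n x_{n-2} w$ with $w \in G(J(P_{n-3}))$ (which are not divisible by $x_{n-1}$). This clean dichotomy should let me read off which decompositions are possible from the monomial $x_n^2 x_{n-2}^2 U$.

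For the forward direction, I would assume $x_n^2 x_{n-2}^2 U \in G(J(P_n)^2)$ and write it as a product of two minimal generators of $J(P_n)$. Since $x_n^2$ divides this monomial and each generator in $G(J(P_n))$ is squarefree and carries $x_n$-degree at most one, both factors must carry $x_n$, hence by Lemma~\ref{lem: rooted order for first power} both are of the form $x_n x_{n-2} u_i$ with $u_i \in G(J(P_{n-3}))$, giving $U = u_1 u_2 \in F(J(P_{n-3})^2)$. To establish minimality, suppose $V = v_1 v_2$ with $v_i \in G(J(P_{n-3}))$ divides $U$; then by Lemma~\ref{lem: order preserved for P_n-3} the monomial $x_n^2 x_{n-2}^2 V$ lies in $F(J(P_n)^2)$ and divides $x_n^2 x_{n-2}^2 U$, forcing $V = U$ by minimality of the latter.

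For the backward direction, start with $U = u_1 u_2 \in G(J(P_{n-3})^2)$, so that $x_n^2 x_{n-2}^2 U = (x_n x_{n-2} u_1)(x_n x_{n-2} u_2) \in F(J(P_n)^2)$. Take any $V \in G(J(P_n)^2)$ dividing $x_n^2 x_{n-2}^2 U$ and write $V = w_1 w_2$ with $w_i \in G(J(P_n))$. The critical observation is that $U$ is a monomial in $x_1,\ldots,x_{n-3}$, so $x_n^2 x_{n-2}^2 U$ is not divisible by $x_{n-1}$; this rules out the case $w_i = x_{n-1}v$ in the Lemma~\ref{lem: rooted order for first power} dichotomy, so each $w_i$ must take the form $x_n x_{n-2} v_i$ with $v_i \in G(J(P_{n-3}))$. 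Then $V = x_n^2 x_{n-2}^2 v_1 v_2$ divides $x_n^2 x_{n-2}^2 U$, so $v_1 v_2$ divides $U$ in $G(J(P_{n-3})^2)$, hence equals $U$, and $V = x_n^2 x_{n-2}^2 U$ as required.

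The main (minor) obstacle is the backward direction, where one must rule out mixed decompositions using both types of $J(P_n)$-generators; the key insight that makes this clean is the absence of $x_{n-1}$ in $x_n^2 x_{n-2}^2 U$, which is immediate from the fact that minimal vertex covers of $P_{n-3}$ are supported on $\{x_1,\ldots,x_{n-3}\}$.
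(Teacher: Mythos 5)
Your proposal is correct and follows essentially the same route as the paper, which simply says to mimic the proof of Lemma~\ref{lem:product of all As} with Lemma~\ref{lem: order preserved for P_n-3} in place of Lemma~\ref{lem: order preserved for P_n-2}; your argument is exactly that adaptation, and the point you highlight (that $x_{n-1}\nmid x_n^2x_{n-2}^2U$ rules out mixed decompositions) is the same implicit use of the dichotomy in Lemma~\ref{lem: rooted order for first power} that the paper relies on.
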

\begin{proof}
 One can mimic the arguments in the proof of the previous lemma by using Lemma~\ref{lem: order preserved for P_n-3} instead of Lemma~\ref{lem: order preserved for P_n-2}. 
\end{proof}


\begin{Lemma}\label{lem: divisor of same type}
	Let $n\geq 7$ and let $uv, u'v'\in F(J(P_n)^2)$ with $u>_\MR v$ and $u'>_\MR v'$. Suppose that $u'v'$ divides $uv$. In the notation of Figure~\ref{branching} the following statements hold.
	\begin{enumerate}
		\item If $u\in \mathcal{A}$ and $v\in \mathcal{C}$, then $u'\in \mathcal{A}$ and $v'\in \mathcal{C}$.
		\item If $u\in \mathcal{B}$ and $v\in \mathcal{C}$, then $u'\in \mathcal{B}$ and $v'\in \mathcal{C}$.
	\end{enumerate}
\end{Lemma}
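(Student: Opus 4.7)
The plan is to run degree-counting arguments on the end-of-path variables $x_n,x_{n-1},x_{n-2},x_{n-3}$. From the recursive definition of the rooted list I take the branching of Figure~\ref{branching} to identify $\mathcal{A} = x_{n-1}x_{n-3}\MR(P_{n-4})$, $\mathcal{B} = x_{n-1}x_{n-2}x_{n-4}\MR(P_{n-5})$, and $\mathcal{C} = x_nx_{n-2}x_{n-4}\MR(P_{n-5})$. Reading off the explicit prefixes, the ``signatures'' are: elements of $\mathcal{A}$ contain $x_{n-1}$ and $x_{n-3}$ but neither $x_{n-2}$ nor $x_n$; elements of $\mathcal{B}$ contain $x_{n-1},x_{n-2},x_{n-4}$ but neither $x_{n-3}$ nor $x_n$; and elements of $\mathcal{C}$ contain $x_n,x_{n-2},x_{n-4}$ but neither $x_{n-1}$ nor $x_{n-3}$.

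The first step is to locate $u'$ in the $x_{n-1}$-branch of $\MR(P_n)$ and $v'$ in the $x_n$-branch. Because $u\in\mathcal{A}\cup\mathcal{B}$ contributes $x_{n-1}$ and $v\in\mathcal{C}$ contributes $x_n$, we have $\deg_{x_{n-1}}(uv)=\deg_{x_n}(uv)=1$. Every minimal vertex cover of $P_n$ contains exactly one of $x_{n-1},x_n$ — the edge $\{x_{n-1},x_n\}$ must be covered, yet keeping both violates minimality — so $u'$ and $v'$ each carry one of the two. The divisibility $u'v'\mid uv$ together with the preceding inequalities forces exactly one of $u',v'$ to carry $x_{n-1}$ and the other $x_n$; since the rooted order lists $x_{n-1}$-generators before $x_n$-generators, $u'>_\MR v'$ yields $x_{n-1}\mid u'$ and $x_n\mid v'$.

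For case (1), $u\in\mathcal{A}$ is free of $x_{n-2}$, so $\deg_{x_{n-2}}(uv)=1$; every element of $x_nx_{n-2}\MR(P_{n-3})$ carries $x_{n-2}$, so $\deg_{x_{n-2}}(v')\geq 1$, whence $\deg_{x_{n-2}}(u')=0$ and therefore $u'\in\mathcal{A}$. A parallel argument on $x_{n-3}$ — where $u,u'\in\mathcal{A}$ both supply it while $v\in\mathcal{C}$ does not — yields $\deg_{x_{n-3}}(v')=0$, so $v'$ lies in the $x_{n-3}$-free subbranch of $x_nx_{n-2}\MR(P_{n-3})$, which is exactly $\mathcal{C}$. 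For case (2), both $u\in\mathcal{B}$ and $v\in\mathcal{C}$ avoid $x_{n-3}$, so $\deg_{x_{n-3}}(uv)=0$; this forces $u'$ and $v'$ each to avoid $x_{n-3}$ as well, pinning $u'$ to the $\mathcal{B}$-subbranch of $x_{n-1}\MR(P_{n-2})$ and $v'$ to the $\mathcal{C}$-subbranch of $x_nx_{n-2}\MR(P_{n-3})$.

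The only real obstacle is the bookkeeping: one must extract the signatures above from Figure~\ref{branching} (using Lemma~\ref{lem:properties of rooted list} and the explicit prefixes of the recursive decomposition) and verify that among $x_{n-1}$-generators exactly the $\mathcal{A}$-subbranch contains $x_{n-3}$, while among $x_n$-generators exactly the $\mathcal{C}$-subbranch avoids it. Once these membership characterizations are in place, the proof reduces to a short composition of the elementary degree inequalities above.
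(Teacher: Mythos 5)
Your proof is correct and follows essentially the same route as the paper: the authors also first place $u'$ in the $x_{n-1}$-branch and $v'$ in the $x_n$-branch using $u'>_{\MR}v'$ and divisibility, and then rule out $u'\in\mathcal{B}$ (resp.\ $v'\in\mathcal{D}$) by observing that this would force $x_{n-2}^2$ (resp.\ $x_{n-3}^2$, or $x_{n-3}$ in case (2)) to divide $u'v'$ while it does not divide $uv$ --- exactly your degree count on the end-of-path variables. Your explicit extraction of the prefixes $x_{n-1}x_{n-3}$, $x_{n-1}x_{n-2}x_{n-4}$, $x_nx_{n-2}x_{n-4}$ for $\mathcal{A},\mathcal{B},\mathcal{C}$ and the ``exactly one of $x_{n-1},x_n$'' observation just makes explicit what the paper leaves implicit.
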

\begin{proof}

(1) First note that if $u\in \mathcal{A}$ and $v\in \mathcal{C}$, then $u \in x_{n-1} \MR(P_{n-2})$ and $v \in x_n x_{n-2} \MR(P_{n-3})$. Since $u'v'|uv$ and $u'>_\MR v'$, it shows that $u' \in x_{n-1} \MR(P_{n-2})$ and $v' \in x_n x_{n-2} \MR(P_{n-3})$.

Now we show that  $u'\in \mathcal{A}$ and $v'\in \mathcal{C}$. Indeed, if $u' \in \MB$, then $x_{n-2}^2 |u'v'$ but  $x_{n-2}^2 \nmid uv$. Therefore,  $u'\in \mathcal{A}$. Furthermore,  if $v' \in \MD$ then $x_{n-3}^2 |u'v'$ becuase $u' \in \MA$. But, again  $x_{n-3}^2 \nmid uv$. Hence $v'\in \mathcal{C}$, as required.	

(2) First note that if $u\in \mathcal{B}$ and $v\in \mathcal{C}$, then $u \in x_{n-1} \MR(P_{n-2})$ and $v \in x_n x_{n-2} \MR(P_{n-3})$. Since $u'v'|uv$ and $u'>_\MR v'$, it follows that $u' \in x_{n-1} \MR(P_{n-2})$ and $v' \in x_n x_{n-2} \MR(P_{n-3})$. Then, to show that $u'\in \mathcal{B}$ and $v'\in \mathcal{C}$, note that if $u' \in \MA$ or $v' \in \MD$, then $x_{n-3} |u'v'$ but  $x_{n-3}\nmid uv$.	
\end{proof}

\begin{Remark}\label{rk:max expression preserved in AC}
 Let $\MR(P_{n-2})=u_1,\dots ,u_m$. Furthermore, let $u_i, u_j$ be monomials such that $x_{n-1}u_i\in \mathcal{A}$ and $x_n u_j\in \mathcal{C}$ where $\mathcal{A}$, $\mathcal{C}$ are as in Figure~\ref{branching}. Let $u_iu_j$ be a maximal expression in $F(J(P_{n-2})^2)$ with $i\leq j$. Then $(x_{n-1}u_i)(x_nu_j)$ is maximal expression in $F(J(P_{n})^2)$. Indeed, otherwise from Lemma~\ref{lem: divisor of same type}, we see that the maximal expression of $x_{n-1}x_nu_iu_j$ is of the form $(x_{n-1}u_p)(x_nu_q)$ with $x_{n-1}u_p \in \MA$ and $x_nu_q \in \MC$.  From $x_{n-1}x_nu_iu_j= x_{n-1}x_nu_pu_q$, we see that $u_iu_j=u_pu_q$. Also, we have $x_{n-1}u_p >_{\MR} x_{n-1}u_i$ or $x_{n-1}u_p = x_{n-1}u_i$  and $x_nu_q >_{\MR} x_nu_j$ in $\MR(P_n)$. This shows that $u_p >_{\MR} u_i$ or $u_p =u_i$  and $u_q >_{\MR} u_j$ in $\MR(P_{n-2})$. This gives a contradiction to the fact that $u_iu_j$ is the maximal expression in $F(J(P_{n-2})^2)$.
\end{Remark}

\begin{Lemma}\label{lem: type AC minimal generator preserved}
	Let $u$ and $v$ be monomials such that $x_{n-1}u\in \mathcal{A}$ and $x_nv\in \mathcal{C}$ where $\mathcal{A}$, $\mathcal{C}$ are as in Figure~\ref{branching}. If $uv\in G(J(P_{n-2})^2)$, then $x_{n-1}x_nuv\in G(J(P_n)^2)$.
\end{Lemma}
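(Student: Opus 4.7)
The plan is to verify, first, that $x_{n-1}x_n uv$ belongs to $F(J(P_n)^2)$, and second, that it has no proper divisor in $F(J(P_n)^2)$; together these give $x_{n-1}x_n uv \in G(J(P_n)^2)$. The first point is immediate: since $x_{n-1}u$ and $x_nv$ are minimal generators of $J(P_n)$, their product $x_{n-1}x_n uv$ is a $2$-fold product.

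For the second point, I would suppose $M = pq \in F(J(P_n)^2)$ divides $x_{n-1}x_n uv$, with $p \geq_{\MR} q$. Since $x_{n-1}u \in \MA$ and $x_n v \in \MC$, Lemma~\ref{lem: divisor of same type}(1) forces $p \in \MA$ and $q \in \MC$. Then I would invoke the branching in Figure~\ref{branching}: on the one hand, $\MA \subseteq x_{n-1}\MR(P_{n-2})$, so $p = x_{n-1}p'$ with $p' \in \MR(P_{n-2})$; on the other hand, $\MC$ sits inside the block $x_n \cdot x_{n-2}x_{n-4}\MR(P_{n-5})$ of the second-level branching, and since $x_{n-2}x_{n-4}\MR(P_{n-5})$ is exactly one of the two pieces of $\MR(P_{n-2})$, I can write $q = x_n q'$ with $q' \in \MR(P_{n-2})$.

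Cancelling $x_{n-1}x_n$ from $pq \mid x_{n-1}x_n uv$ yields $p'q' \mid uv$ with $p'q' \in F(J(P_{n-2})^2)$. Then $p'q'$ is divisible by some $m \in G(J(P_{n-2})^2)$, and $m \mid uv$; minimality of $uv$ forces $m = uv$, so $p'q' = uv$ and therefore $M = x_{n-1}x_n uv$, as desired.

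The only nontrivial ingredient is the observation that stripping $x_n$ from an element of $\MC$ leaves a generator of $J(P_{n-2})$. This uses the second level of the branching recursion and is the key to reducing the minimality claim in $J(P_n)^2$ to the known minimality of $uv$ in the smaller ideal $J(P_{n-2})^2$.
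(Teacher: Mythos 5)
Your proposal is correct and follows essentially the same route as the paper: both reduce the claim to Lemma~\ref{lem: divisor of same type}(1), which pins any divisor in $F(J(P_n)^2)$ down to the form $(x_{n-1}p')(x_nq')$ with $p', q' \in \MR(P_{n-2})$, and then cancel $x_{n-1}x_n$ to contradict (or directly exploit) the minimality of $uv$ in $J(P_{n-2})^2$. The paper phrases it as a contradiction with a strict divisor from $G(J(P_n)^2)$ rather than an arbitrary divisor from $F(J(P_n)^2)$, but this is only a cosmetic difference.
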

\begin{proof}
Assume for a contradiction $uv\in G(J(P_{n-2})^2)$ but $x_{n-1}x_nuv\notin G(J(P_n)^2)$. Then there exists some $U \in G(J(P_n)^2)$, such that $U$ strictly divides $x_{n-1}x_nuv$. Then from Lemma~\ref{lem: divisor of same type}, we see that $U=(x_{n-1}u')( x_nv')$ for some $x_{n-1}u' \in \MA$ and $x_nv' \in \MC$. Then $u'v' \in F(J(P_{n-2})^2)$ and $u'v'$ strictly divides $uv$, which contradicts the hypothesis that $uv\in G(J(P_{n-2})^2)$. 
\end{proof}

\begin{Lemma}\label{lem: type AC order preserved}
	Let $u,u',v,v'$ be monomials such that $x_{n-1}u, x_{n-1}u'\in \mathcal{A}$ and $x_nv,x_nv'\in \mathcal{C}$ where $\mathcal{A}$, $\mathcal{C}$ are as in Figure~\ref{branching}. If $uv >_\MR u'v'$ in $F(J(P_{n-2})^2)$, then $x_{n-1}x_nuv >_\MR x_{n-1}x_nu'v'$ in $F(J(P_{n})^2)$. 
\end{Lemma}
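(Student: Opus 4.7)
The plan is to find the maximal expressions of $uv$ and $u'v'$ in $F(J(P_{n-2})^2)$, lift them to maximal expressions of $x_{n-1}x_nuv$ and $x_{n-1}x_nu'v'$ in $F(J(P_n)^2)$ via Remark~\ref{rk:max expression preserved in AC}, and then verify that the lexicographic comparison of exponent vectors is preserved under the lift.

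First, I would unpack the hypothesis structurally. Since $x_{n-1}u, x_{n-1}u' \in \MA$, each of $u, u'$ lies in the first block $x_{n-3}\MR(P_{n-4})$ of $\MR(P_{n-2})$, and in particular is not divisible by $x_{n-2}$. Since $x_nv, x_nv' \in \MC$, each of $v, v'$ has the form $x_{n-2}x_{n-4}w$ with $w \in \MR(P_{n-5})$, and hence lies in the second block $x_{n-2}x_{n-4}\MR(P_{n-5})$ of $\MR(P_{n-2})$. Consequently $uv$ and $u'v'$ are each divisible by $x_{n-2}$ exactly once. Let $u_iu_j$ (with $i\leq j$) and $u_ku_\ell$ (with $k\leq \ell$) be the maximal expressions of $uv$ and $u'v'$ in $F(J(P_{n-2})^2)$. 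By Lemma~\ref{lem:properties of rooted list}(2) applied to $P_{n-2}$, the elements of $\MR(P_{n-2})$ divisible by $x_{n-2}$ form a suffix, namely the second block. Since exactly one factor in each maximal expression carries $x_{n-2}$, this forces $u_i, u_k$ into the first block and $u_j, u_\ell$ into the second. In particular $x_{n-1}u_i, x_{n-1}u_k \in \MA$ and $x_nu_j, x_nu_\ell \in \MC$, so Remark~\ref{rk:max expression preserved in AC} yields that the maximal expressions in $F(J(P_n)^2)$ of $x_{n-1}x_nuv$ and $x_{n-1}x_nu'v'$ are $(x_{n-1}u_i)(x_nu_j)$ and $(x_{n-1}u_k)(x_nu_\ell)$, respectively.

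Finally, unwinding the lexicographic definition of $>_\MR$, the hypothesis $uv >_\MR u'v'$ becomes: either $u_i >_\MR u_k$ in $\MR(P_{n-2})$, or $u_i = u_k$ and $u_j >_\MR u_\ell$ in $\MR(P_{n-2})$. The first comparison lifts to $x_{n-1}u_i >_\MR x_{n-1}u_k$ in $\MR(P_n)$ directly by Remark~\ref{rk:max expression preserved P_{n-2}}. For the second, writing $u_j = x_{n-2}x_{n-4}w$ and $u_\ell = x_{n-2}x_{n-4}w'$ with $w, w' \in \MR(P_{n-5})$, both $u_j >_\MR u_\ell$ in $\MR(P_{n-2})$ and $x_nu_j >_\MR x_nu_\ell$ in $\MR(P_n)$ become equivalent to $w >_\MR w'$ in $\MR(P_{n-5})$: the former via Remark~\ref{rk:max expression preserved P_{n-3}} with $n$ replaced by $n-2$, and the latter by combining Remark~\ref{rk:max expression preserved P_{n-3}} with Remark~\ref{rk:max expression preserved P_{n-2}} applied with $n$ replaced by $n-3$. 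Assembling these transports gives $x_{n-1}x_nuv >_\MR x_{n-1}x_nu'v'$. The chief technical step is this last part: whereas the $\MA$-comparison is handled by a single invocation of Remark~\ref{rk:max expression preserved P_{n-2}}, the $\MC$-comparison must be threaded through two levels of the recursion, since $\MC$-elements are parametrized by the interplay of the second block of $\MR(P_{n-2})$ and the first block of $\MR(P_{n-3})$.
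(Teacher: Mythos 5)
Your proposal is correct and follows essentially the same route as the paper: reduce to the maximal expressions of $uv$ and $u'v'$, observe that their first and second factors land in the first and second blocks of $\MR(P_{n-2})$ respectively, lift to maximal expressions in $F(J(P_n)^2)$ via Remark~\ref{rk:max expression preserved in AC}, and then transport the two lexicographic comparisons componentwise (the $\MA$-comparison by Remark~\ref{rk:max expression preserved P_{n-2}}, the $\MC$-comparison by tracking the common index in $\MR(P_{n-5})$). One trivial slip: the fact that the elements of $\MR(P_{n-2})$ divisible by $x_{n-2}$ form a suffix is part (1), not part (2), of Lemma~\ref{lem:properties of rooted list} applied to $P_{n-2}$.
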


\begin{proof}
  Branching of $\MR(P_n)$ in Figure~\ref{branching}, shows that if  $x_{n-1}u, x_{n-1}u'\in \mathcal{A}$ and $x_nv,x_nv'\in \mathcal{C}$, then we have $u, u' \in x_{n-3} \MR(P_{n-4})$ and $v, v' \in  x_{n-2}x_{n-4} \MR(P_{n-5})$. Then by the definition of $>_{\MR}$, it follows that $u>_{\MR } v$ and $u'>_{\MR} v'$ in $ \MR(P_{n-2})$.
 
 Note that because of Lemma~\ref{lem: divisor of same type} (1) we may assume that $uv$ and $u'v'$ are maximal expressions in $F(J(P_{n-2})^2)$. Then Remark~\ref{rk:max expression preserved in AC} implies that 
 the expressions $(x_{n-1}u)(x_nv)$ and $(x_{n-1}u')(x_nv')$ are maximal in $F(J(P_n)^2)$.

Let $uv >_\MR u'v'$ in $F(J(P_{n-2})^2)$, then by definition of $>_{\MR}$, we have either $u>_{\MR} u'$ or $u= u'$ and $v>_{\MR} v'$ in  $\MR(P_{n-2})$. If  $u>_{\MR} u'$, then by Remark~\ref{rk:max expression preserved P_{n-2}} $x_{n-1}u>_\MR x_{n-1}u'$ in $\MR(P_n)$. If  $v>_{\MR} v'$ in  $\MR(P_{n-2})$, then $x_{n-1}v$ appears before $x_{n-1}v'$ in the sublist $\mathcal{B}$. This implies $x_nv$ appears before $x_nv'$ in the sublist $\mathcal{C}$ and thus $x_nv>_\MR x_nv'$ as desired.
\end{proof}

\begin{Lemma}\label{lem: type BC minimal generator preserved}
	Let $u,v\in G(J(P_{n-5}))$. If $uv\in G(J(P_{n-5})^2)$, then $$(x_{n-1}x_{n-2}x_{n-4}u)(x_nx_{n-2}x_{n-4}v)\in G(J(P_n)^2).$$
\end{Lemma}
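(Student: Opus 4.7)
The plan is to mirror the proof of Lemma~\ref{lem: type AC minimal generator preserved}, using Lemma~\ref{lem: divisor of same type}(2) in place of Lemma~\ref{lem: divisor of same type}(1). First I would observe that the branching of $\MR(P_n)$ in Figure~\ref{branching} gives $\mathcal{B}=x_{n-1}x_{n-2}x_{n-4}\,\MR(P_{n-5})$ and $\mathcal{C}=x_nx_{n-2}x_{n-4}\,\MR(P_{n-5})$, so the monomials $x_{n-1}x_{n-2}x_{n-4}u$ and $x_nx_{n-2}x_{n-4}v$ lie in $\mathcal{B}$ and $\mathcal{C}$ respectively, each of which is contained in $G(J(P_n))$. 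Consequently
\[ U:=(x_{n-1}x_{n-2}x_{n-4}u)(x_nx_{n-2}x_{n-4}v)\in F(J(P_n)^2), \]
and since every element of $\mathcal{B}$ precedes every element of $\mathcal{C}$ in the rooted list, the first factor is $>_{\MR}$ the second.

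Next I would argue by contradiction. Assume $U\notin G(J(P_n)^2)$; then some $W\in G(J(P_n)^2)$ strictly divides $U$. Write $W=w_1w_2$ with $w_1,w_2\in G(J(P_n))$, ordered so that $w_1\geq_{\MR} w_2$. Applying Lemma~\ref{lem: divisor of same type}(2) to the factorisation $U=(x_{n-1}x_{n-2}x_{n-4}u)(x_nx_{n-2}x_{n-4}v)$ and to the factorisation $W=w_1w_2$, I would conclude that $w_1\in\mathcal{B}$ and $w_2\in\mathcal{C}$. Hence there exist $u',v'\in G(J(P_{n-5}))$ such that $w_1=x_{n-1}x_{n-2}x_{n-4}u'$ and $w_2=x_nx_{n-2}x_{n-4}v'$.

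Finally I would cancel the common factor $x_{n-1}x_nx_{n-2}^2x_{n-4}^2$ from the divisibility $W\mid U$ to deduce $u'v'\mid uv$; since $W$ divides $U$ strictly, this divisibility is also strict, contradicting $uv\in G(J(P_{n-5})^2)$. The only delicate step is verifying that the hypotheses of Lemma~\ref{lem: divisor of same type}(2) are met — in particular that the chosen ordering $w_1\geq_{\MR} w_2$ is strict. This follows because $W\mid U$ forces $\{w_1,w_2\}$ to contain one factor carrying $x_{n-1}$ and another carrying $x_n$ (the variables $x_{n-1}$ and $x_n$ each appear with exponent one in $U$, and every minimal vertex cover of $P_n$ contains at least one of $x_{n-1},x_n$), so $w_1\neq w_2$ and in fact $w_1>_{\MR} w_2$.
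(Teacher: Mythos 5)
Your proof is correct and takes essentially the same approach as the paper: the paper's own proof of this lemma is just the remark that ``a similar argument as in Lemma~\ref{lem: type AC minimal generator preserved} gives the desired result,'' and your argument is exactly that adaptation --- contradiction via Lemma~\ref{lem: divisor of same type}(2) followed by cancelling $x_{n-1}x_nx_{n-2}^2x_{n-4}^2$. Your final paragraph checking that the strict ordering hypothesis $w_1>_{\MR}w_2$ of Lemma~\ref{lem: divisor of same type} is satisfied fills in a detail the paper leaves implicit, and it is verified correctly.
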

\begin{proof}
A similar argument as in Lemma~\ref{lem: type AC minimal generator preserved} gives the desired result.
\end{proof}

\begin{Remark}\label{rk:max expression preserved in BC}
Let $\MR(P_{n-5})=u_1>_{\MR}\dots >_{\MR} u_p$. If  $u_iu_j$ is a maximal expression with $i\leq j$ in $F(J(P_{n-5})^2)$ then together with Lemma~\ref{lem: divisor of same type} and a similar explanation as in Remark~\ref{rk:max expression preserved in AC} we see that $(x_{n-1}x_{n-2}x_{n-4}u_i)(x_nx_{n-2}x_{n-4}u_j)$ is a maximal expression in $F(J(P_{n})^2)$.
\end{Remark}

\begin{Lemma}\label{lem: type BC order preserved}
	Let $\MR(P_{n-5})=u_1>_{\MR}\dots >_{\MR} u_p$.	 If $u_iu_k >_\MR u_ju_l$ in $F(J(P_{n-5})^2)$ for some $i,j,k$ and $l$, then $x_nx_{n-1}x_{n-2}^2x_{n-4}^2u_iu_k >_\MR x_nx_{n-1}x_{n-2}^2x_{n-4}^2u_ju_l$ in $F(J(P_{n})^2)$. 
\end{Lemma}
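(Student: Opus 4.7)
The plan is to mimic the proof of Lemma~\ref{lem: type AC order preserved}, replacing the pair $(\MA,\MC)$ by the pair $(\MB,\MC)$. The structural reason is that in the branching of $\MR(P_n)$ shown in Figure~\ref{branching}, the sublists $\MB=x_{n-1}x_{n-2}x_{n-4}\MR(P_{n-5})$ and $\MC=x_nx_{n-2}x_{n-4}\MR(P_{n-5})$ are both indexed by $\MR(P_{n-5})$, and the order within each sublist exactly mirrors the order of $\MR(P_{n-5})$.

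First I would reduce to the case where $u_iu_k$ and $u_ju_l$ are themselves the maximal expressions of their respective monomials, with the convention $i\leq k$ and $j\leq l$, so that $u_i\geq_\MR u_k$ and $u_j\geq_\MR u_l$ in $\MR(P_{n-5})$. Set $W_1:=x_nx_{n-1}x_{n-2}^2x_{n-4}^2u_iu_k$ and $W_2:=x_nx_{n-1}x_{n-2}^2x_{n-4}^2u_ju_l$. Then by Remark~\ref{rk:max expression preserved in BC} the factorizations
\[
(x_{n-1}x_{n-2}x_{n-4}u_i)(x_nx_{n-2}x_{n-4}u_k) \quad\text{and}\quad (x_{n-1}x_{n-2}x_{n-4}u_j)(x_nx_{n-2}x_{n-4}u_l)
\]
are the maximal expressions of $W_1$ and $W_2$ in $F(J(P_n)^2)$, with the first factor of each lying in $\MB$ and the second in $\MC$.

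Next I would unpack the hypothesis $u_iu_k>_\MR u_ju_l$ according to the definition of $>_\MR$ via lex comparison of exponent tuples in $\MR(P_{n-5})$: either $u_i>_\MR u_j$ in $\MR(P_{n-5})$, or $u_i=u_j$ and $u_k>_\MR u_l$ in $\MR(P_{n-5})$. In the first subcase, because $\MB$ inherits the order of $\MR(P_{n-5})$ under multiplication by $x_{n-1}x_{n-2}x_{n-4}$, one has $x_{n-1}x_{n-2}x_{n-4}u_i>_\MR x_{n-1}x_{n-2}x_{n-4}u_j$ in $\MR(P_n)$, so the first factors of the maximal expressions of $W_1$ and $W_2$ already differ in the desired direction and force $W_1>_\MR W_2$. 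In the second subcase the first factors coincide, and since $\MC$ likewise inherits the order of $\MR(P_{n-5})$ under multiplication by $x_nx_{n-2}x_{n-4}$, we obtain $x_nx_{n-2}x_{n-4}u_k>_\MR x_nx_{n-2}x_{n-4}u_l$, so the second factors deliver $W_1>_\MR W_2$.

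The main obstacle is justifying that the displayed BC-type factorizations really are the maximal expressions of $W_1$ and $W_2$ in $F(J(P_n)^2)$: a priori a competing factorization of AC, AD, or BD type could dominate lexicographically. This is exactly what Remark~\ref{rk:max expression preserved in BC} supplies, with Lemma~\ref{lem: divisor of same type}(2) ruling out the competing factorization types. Once this is in place, the comparison in $F(J(P_n)^2)$ reduces transparently to the comparison in $F(J(P_{n-5})^2)$, and no further technical difficulty should arise beyond those already handled in the proof of Lemma~\ref{lem: type AC order preserved}.
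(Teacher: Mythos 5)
Your proposal is correct and follows essentially the same route as the paper: reduce to maximal expressions with $i\leq k$, $j\leq l$, invoke Remark~\ref{rk:max expression preserved in BC} to see that the $\MB$--$\MC$ factorizations are the maximal expressions in $F(J(P_n)^2)$, and then compare factors using the fact that the sublists $\MB$ and $\MC$ inherit the order of $\MR(P_{n-5})$. No discrepancies to report.
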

\begin{proof}
We may assume that $u_iu_k$ and $u_ju_l$ are maximal expressions with $i \leq k$ and $j\leq l$. By Remark~\ref{rk:max expression preserved in BC} the expressions
\[(x_{n-1}x_{n-2}x_{n-4}u_i)(x_nx_{n-2}x_{n-4}u_k) \text{ and }  (x_{n-1}x_{n-2}x_{n-4}u_j)(x_nx_{n-2}x_{n-4}u_l)\]
are both maximal in $F(J(P_{n})^2)$. Given that $u_iu_k >_\MR u_ju_l$ in $F(J(P_{n-5})^2)$, we have that either $i <j$ or $i=j$ and $k < l$.

If $i<j$, then $x_{n-1}x_{n-2}x_{n-4}u_i$ appears before $x_{n-1}x_{n-2}x_{n-4}u_j$ in the sublist $\mathcal{B}$. Therefore we get $x_{n-1}x_{n-2}x_{n-4}u_i>_\MR x_{n-1}x_{n-2}x_{n-4}u_j$ in $\MR(P_n)$.

If $k<l$, then $x_nx_{n-2}x_{n-4}u_k$ appears before $x_nx_{n-2}x_{n-4}u_l$ in the sublist $\mathcal{C}$. Therefore  $x_nx_{n-2}x_{n-4}u_k >_\MR x_nx_{n-2}x_{n-4}u_l$ in $\MR(P_n)$. Thus the result follows by the definition of rooted order.

\end{proof}


\section{$2$-fold products of $J(P_n)$ versus minimal generators of $J(P_n)^2$}\label{sec:2fold products and minimal generators}

To be able to prove our main result, we need to filter out those $2$-fold products which are not in  $G(J(P_n)^2)$. The next lemma gives a sufficient condition for a $2$-fold product to be a non-minimal generator. While reading its proof, we advise the reader to keep in mind that a minimal vertex cover of a path cannot contain $3$ consecutive vertices.

\begin{Lemma}\label{lem:sufficient condition for nonminimal gens}
	Let $n\geq 5$. Let $u$ and $v$ be minimal generators of $J=J(P_n)$ such that $x_{n-1}x_{n-4}\mid u$ and $x_nx_{n-3}\mid v$. Then $uv$ is not a minimal generator of $J^2$. Moreover, there exists a $2$-fold product $pw\in F(J^2)$ such that $pw\mid uv$ and $pw>_{\MR} uv$.
\end{Lemma}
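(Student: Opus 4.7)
My approach is to construct explicit $p,w\in G(J(P_n))$, depending on two structural sub-cases for $u$, and then verify the two required properties $pw\mid uv$ (with $pw\ne uv$) and $pw>_{\MR} uv$.

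\emph{Structure of $u$ and $v$.} Since $u$ contains $x_{n-1}$ (the only neighbour of $x_n$) and $v$ contains $x_n$ (the only neighbour of $x_{n-1}$), minimality forces $x_n\nmid u$ and $x_{n-1}\nmid v$. Using the hint that a minimal vertex cover of a path has no three consecutive vertices, together with the maximality of the complementary independent set, one deduces that $v$ must contain $x_{n-2}$ and not $x_{n-4}$, while $u$ falls into exactly one of: \emph{Case A}, $x_{n-2}\mid u$ and $x_{n-3}\nmid u$; or \emph{Case B}, $x_{n-3}\mid u$ and $x_{n-2}\nmid u$. Applying Lemma~\ref{lem: rooted order for first power} gives $u=x_{n-1}u'$ with $u'\in G(J(P_{n-2}))$ and $v=x_nx_{n-2}v'$ with $v'\in G(J(P_{n-3}))$, and iterating on $v'$ (which contains $x_{n-3}$) yields $v'=x_{n-3}v''$ with $v''\in G(J(P_{n-4}))$.

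\emph{The construction and divisibility.} Let $\alpha$ be the largest divisor of $u$ involving only the variables $x_1,\dots,x_{n-4}$ (so $\alpha=u/(x_{n-1}x_{n-2})$ in Case A and $\alpha=u/(x_{n-1}x_{n-3})$ in Case B) and define
\[
p=x_{n-1}v',\qquad w=x_nx_{n-2}\alpha.
\]
To see $p\in G(J(P_n))$, by Lemma~\ref{lem: rooted order for first power} it suffices to check $v'\in G(J(P_{n-2}))$: $v'$ already covers all edges of $P_{n-3}$, and the only new edge in $P_{n-2}$, namely $\{x_{n-3},x_{n-2}\}$, is covered uniquely by $x_{n-3}\in v'$. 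For $w$, the same restriction-to-subpath argument yields $\alpha\in G(J(P_{n-3}))$, the key point being that $x_{n-4}\in\alpha$ uniquely covers the new edge $\{x_{n-4},x_{n-3}\}$. A direct multiplication then gives
\[
\frac{uv}{pw}=\begin{cases} x_{n-2}, & \text{Case A},\\ x_{n-3}, & \text{Case B},\end{cases}
\]
so $pw$ is a $2$-fold product that strictly divides $uv$; in particular $uv\notin G(J^2)$.

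\emph{Rooted order comparison.} Using the four-way branching of $\MR(P_n)$ in Figure~\ref{branching}, one places the factors: $p\in\mathcal A=x_{n-1}x_{n-3}G(J(P_{n-4}))$ in both cases, $w\in\mathcal C=x_nx_{n-2}x_{n-4}G(J(P_{n-5}))$, and $v\in\mathcal D$, while $u\in\mathcal B$ in Case A and $u\in\mathcal A$ in Case B. In Case A the inequality $p>_{\MR} u$ is immediate from $\mathcal A<\mathcal B$. In Case B, with $p,u\in\mathcal A$, I would descend one level into $\MR(P_{n-4})$: the $P_{n-4}$-part of $p$ is $v''=x_{n-5}v'''\in x_{n-5}G(J(P_{n-6}))$, while that of $u$ is $x_{n-4}x_{n-6}u'''\in x_{n-4}x_{n-6}G(J(P_{n-7}))$, and the first sublist precedes the second in $\MR(P_{n-4})$, giving $p>_{\MR} u$. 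Since $p>_{\MR} w$ and $u>_{\MR} v$ (from $\mathcal A<\mathcal C$ and $\mathcal A,\mathcal B<\mathcal D$), the maximal expressions of $pw$ and $uv$ are $(p,w)$ and $(u,v)$ respectively, so $p>_{\MR} u$ lifts to $pw>_{\MR} uv$.

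The main obstacle will be the Case B order comparison, which requires resolving the tie $p,u\in\mathcal A$ by going one more level deep into the recursive structure of $\MR$. A secondary subtlety is the small cases $n=5,6,7$, where variables such as $x_{n-6}$ do not exist and $u''',v'''$ degenerate to $1$; the construction still produces honest minimal vertex covers and the argument goes through, but the decompositions must be interpreted with some care.
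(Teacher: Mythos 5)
Your construction of $p$ and $w$ is correct and is in substance the same as the paper's (the paper's $w$ is your $p=x_{n-1}v'$ and its $p$ is your $w$, obtained by the same variable swaps in the two sub-cases), and your verification that $p,w\in G(J(P_n))$ and that $uv/(pw)$ equals $x_{n-2}$ or $x_{n-3}$ is sound. The gap is in the order comparison. You write that ``the maximal expressions of $pw$ and $uv$ are $(p,w)$ and $(u,v)$ respectively'' and justify this only by $p>_{\MR}w$ and $u>_{\MR}v$. That shows these expressions are correctly ordered, not that they are maximal: by the definition of maximal expression one must range over \emph{all} ways of writing the monomial $uv$ as a product of two minimal generators, and distinct factorizations genuinely occur (e.g.\ $u_3u_6=u_4u_5$ in Table~\ref{Table2} for $n=7$). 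Since $>_{\MR}$ on $F(J(P_n)^2)$ is defined through maximal expressions, $p>_{\MR}u$ does not by itself yield $pw>_{\MR}uv$: a priori $uv$ could admit another factorization $u_1v_1$ with $u_1>_{\MR}p$.

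This is exactly the point the paper spends the first half of its proof on: it sets $X=x_nx_{n-1}x_{n-2}x_{n-3}x_{n-4}$, proves that any maximal expression $u'v'$ of $uv$ satisfies $\gcd(u',X)=\gcd(u,X)$ and $\gcd(v',X)=\gcd(v,X)$, and only then performs the construction on $u',v'$. Your argument is repairable along the same lines without changing $p$ and $w$: in Case A, $x_{n-2}^2\mid uv$ forces every factorization $u_1v_1$ with $u_1>_{\MR}v_1$ to have $u_1\in\mathcal B$, so the leading factor of the maximal expression of $uv$ lies in $\mathcal B$ and is beaten by $p\in\mathcal A$; in Case B, $x_{n-3}^2\mid uv$ forces $u_1\in\mathcal A$ and $v_1\in\mathcal D$, whence $x_{n-4}\mid u_1$, and your descent into $\MR(P_{n-4})$ then gives $p>_{\MR}u_1$ for every such $u_1$. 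As written, however, this step is asserted rather than proved, so the proof of $pw>_{\MR}uv$ is incomplete.
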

\begin{proof}
	 Let $X=x_nx_{n-1}x_{n-2}x_{n-3}x_{n-4}$. Since $v$ is a minimal generator $x_{n-1}\nmid v$. This implies $x_{n-2}|v$. Then because $v$ is minimal, we get $x_{n-4}\nmid v$. Thus \[\gcd(v,X)=x_nx_{n-2}x_{n-3}.\]
	By minimality of $u$ we get $x_n\nmid u$. Since $x_{n-4}$ divides $u$ we see that $u$ is divisible by either $x_{n-2}$ or $x_{n-3}$, but not both. Therefore
	\[\gcd(u,X)=x_{n-1}x_{n-2}x_{n-4} \ \text{ or } \ \gcd(u,X)=x_{n-1}x_{n-3}x_{n-4}.\]
	
	Let $u'v'$ be a maximal expression for $uv$ for some $u'>_{\MR} v'$. We claim that $\gcd(u,X)=\gcd(u',X)$ and $\gcd(v,X)=\gcd(v',X)$. First observe that since $u'>_{\MR} v'$ the variable $x_{n-1}$ divides $u'$ but not $v'$. This implies $x_n$ divides $v'$ but not $u'$ because both $u'$ and $v'$ are minimal generators. We consider cases:
	
	\emph{Case 1}: Suppose $\gcd(u,X)=x_{n-1}x_{n-2}x_{n-4}$.  Since $uv=u'v'$ we see that $x_{n-2}$ divides both $u'$ and $v'$. Now by minimality of $u'$ we must have $x_{n-3}\nmid u'$. This implies $x_{n-3}\mid v'$. Minimality of $v'$ requires $x_{n-4}\nmid v'$. Then $x_{n-4}\mid u'$. Hence $\gcd(v',X)=x_nx_{n-2}x_{n-3}$ and $\gcd(u',X)=x_{n-1}x_{n-2}x_{n-4}$ as desired.
	
	\emph{Case 2:}  Suppose $\gcd(u,X)=x_{n-1}x_{n-3}x_{n-4}$. Then since $x_{n-3}^2\mid uv=u'v'$ we see that  $x_{n-3}$ divides both $u'$ and $v'$. By minimality of $u'$ observe that $x_{n-2}\nmid u'$. Since $x_{n-2} \mid uv=u'v'$ we get $x_{n-2}\mid v'$. Now by minimality of $v'$ we get $x_{n-4}\nmid v'$ which implies $x_{n-4}\mid u'$. Hence $\gcd(v',X)=x_nx_{n-2}x_{n-3}$ and $\gcd(u',X)=x_{n-1}x_{n-3}x_{n-4}$ which completes the proof of our claim.

	Observe that $w=(v'x_{n-1})/(x_nx_{n-2})$ is a minimal vertex cover of $P_n$. Again we consider cases:
	
	\emph{Case (a)}: Suppose $x_{n-3}\mid u'$. Observe that $p=(u'x_{n-2}x_n)/(x_{n-1}x_{n-3})$ is a minimal vertex cover of $P_n$ and $pwx_{n-3}=u'v'$. 
	
	\emph{Case (b)}: Suppose $x_{n-2}\mid u'$. Observe that $p=(u'x_n)/x_{n-1}$ is a minimal vertex cover of $P_n$ and $pwx_{n-2}=u'v'$.

Observe that in each case $w>_{\MR} p$. Since both $w$ and $u'$ are in $x_{n-1}\MR(P_{n-2})$, applying Lemma~\ref{lem:properties of rooted list} (2) to $P_{n-2}$ we see that $w>_{\MR}u'$. Therefore $pw>_{\MR} u'v'$.
\end{proof}


We will need the next result to detect some of $2$-fold products which yield non-minimal generators or non-maximal expressions.

\begin{Proposition}\label{prop:condition for non-minimal gen or non-maximal expression}
    Let $\MR(P_{n})=u_1>_{\MR}\dots >_{\MR} u_k$ where $n\geq 2$.
	Let $1< i<j\leq k$. Suppose $u_j$ contains a variable from $(u_1,\dots ,u_{i-1}):(u_i)$. Then either $u_iu_j$ is not a minimal generator of $J(P_n)^2$ or $u_iu_j$ is not a maximal $2$-fold expression.
\end{Proposition}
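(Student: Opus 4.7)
The plan is to exhibit a $2$-fold product $u_\ell u_{b'}\in F(J(P_n)^2)$ that either strictly divides $u_iu_j$ (forcing $u_iu_j\notin G(J(P_n)^2)$) or equals $u_iu_j$ as a monomial but with a lex-larger expression (so $u_iu_j$ is not a maximal expression). Since $J(P_n)$ has linear quotients in rooted order by Remark~\ref{rk:rooted order linear quotients}, the colon $(u_1,\dots,u_{i-1}):(u_i)$ is generated by variables, and the variable $x\mid u_j$ from the hypothesis is witnessed by some $\ell<i$ with $u_\ell:u_i=x$. Writing $u_\ell=xq$ with $q=\gcd(u_\ell,u_i)$ and combining with $x\mid u_j$ gives $u_\ell\mid u_iu_j$, so $u_iu_j=u_\ell\cdot u_b$ for a monomial $u_b$. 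If one can find a minimal generator $u_{b'}\in G(J(P_n))$ with $u_{b'}\mid u_b$, then $u_\ell u_{b'}$ is a $2$-fold product dividing $u_iu_j$; either $u_\ell u_{b'}$ strictly divides $u_iu_j$, giving non-minimality, or $u_\ell u_{b'}=u_iu_j$ as monomials, in which case the expression $u_\ell u_{b'}$ is lex-larger than $u_iu_j$ because $\ell<i$.

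To produce $u_{b'}$, I would induct on $n$ and split according to the decomposition $\MR(P_n)=x_{n-1}\MR(P_{n-2})\sqcup x_nx_{n-2}\MR(P_{n-3})$ from Lemma~\ref{lem: rooted order for first power}. When $u_i,u_j\in x_{n-1}\MR(P_{n-2})$, write $u_i=x_{n-1}v_i$ and $u_j=x_{n-1}v_j$; the colon of $u_i$ in $\MR(P_n)$ coincides with the colon of $v_i$ in $\MR(P_{n-2})$ (the $x_{n-1}$ factors cancel) and $x\mid v_j$, so the inductive hypothesis on $P_{n-2}$ supplies a reduction $v_av_b$ of $v_iv_j$ which lifts to $u_au_b=x_{n-1}^2v_av_b$ via Lemma~\ref{lem: order preserved for P_n-2}. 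The symmetric case $u_i,u_j\in x_nx_{n-2}\MR(P_{n-3})$ is handled in the same way via Lemma~\ref{lem: order preserved for P_n-3}, using Lemma~\ref{lem: colon contains the second variable from the end} to reduce to colons within $\MR(P_{n-3})$; note that here $x\ne x_{n-1}$ since $x_{n-1}\nmid u_j$, so the $x_{n-1}$ always present in the colon cannot be the witness $x$.

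The hard case is the mixed one, $u_i=x_{n-1}v_i$ and $u_j=x_nx_{n-2}w_{j'}$. A preliminary observation, verifiable by induction via the rooted recursion for $\MR(P_{n-2})$, is that $x_{n-2}$ never generates the colon of any $v\in G(J(P_{n-2}))$; hence $x\ne x_{n-2}$ and $x\mid w_{j'}$ with $x\in\{x_1,\dots,x_{n-3}\}$. The naive lift $u_\ell=x_{n-1}v_\ell$ of the linear-quotients witness from $\MR(P_{n-2})$ can fail to yield a valid $u_{b'}$: the support of $u_iu_j/u_\ell$ need not contain a vertex cover of $P_n$, the obstruction being precisely that a path-neighbor $y$ of $x$ with $y\notin u_j$ happens to lie in $u_\ell$. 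The main obstacle is handling this failure, which I would overcome in two ways depending on the subcase: invoke Lemma~\ref{lem:sufficient condition for nonminimal gens} when the configuration $x_{n-1}x_{n-4}\mid u_i$ and $x_nx_{n-3}\mid u_j$ is present (which directly yields a strict divisor $pw$ of $u_iu_j$ with $pw>_\MR u_iu_j$), and otherwise replace $\ell$ by a carefully chosen smaller predecessor so that the support of $u_iu_j/u_\ell$ genuinely covers $P_n$ and hence contains a minimal vertex cover. Completing this dichotomy in the mixed case is the crux of the proof.
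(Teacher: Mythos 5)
Your skeleton matches the paper's: induction on $n$, the splitting of $\MR(P_n)$ into $x_{n-1}\MR(P_{n-2})$ and $x_nx_{n-2}\MR(P_{n-3})$, lifting of the two pure cases via Lemmas~\ref{lem: order preserved for P_n-2}, \ref{lem: order preserved for P_n-3} and \ref{lem: colon contains the second variable from the end}, and the use of Lemma~\ref{lem:sufficient condition for nonminimal gens} when $x_{n-1}x_{n-4}\mid u_i$ and $x_nx_{n-3}\mid u_j$. You also correctly allow for both outcomes of the dichotomy; the ``equal monomial, lex-larger expression'' branch is genuinely needed, since for instance in $P_7$ one has $u_4u_5=u_3u_6\in G(J(P_7)^2)$, so no strict divisor of $u_4u_5$ exists and only non-maximality can be concluded.

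The gap is that the mixed case --- which is where essentially all of the work in the paper's proof lies --- is not carried out. ``Replace $\ell$ by a carefully chosen smaller predecessor so that the support of $u_iu_j/u_\ell$ genuinely covers $P_n$'' is a statement of intent, not an argument: you give no construction of that predecessor, no proof that it exists, and no verification that the resulting two-fold product is either a strict divisor or a lex-larger expression of the same monomial. The paper resolves this by a finer split according to the four sublists $\MA,\MB,\MC,\MD$ of Figure~\ref{branching}. In the subcases $(u_i,u_j)\in\MA\times\MD$ with $x_{n-4}\nmid u_i$ and $(u_i,u_j)\in\MB\times\MC$, the resolution is either an explicit swap of the $\MR(P_{n-6})$- resp.\ $\MR(P_{n-5})$-components between the two factors, producing an equal but lex-larger expression, or a descent of the hypothesis to $\MR(P_{n-6})$, $\MR(P_{n-5})$ or $\MR(P_{n-2})$ followed by the inductive hypothesis there; each descent requires checking that the colon condition survives (e.g.\ that $x_{n-3}$ or $x_{n-4}$ cannot be the witnessing variable, via Lemmas~\ref{lem: colon contains the second variable from the end} and \ref{lem:properties of rooted list}), which is of the same nature as, but not implied by, your observation about $x_{n-2}$. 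Until this case analysis is completed, your argument proves the proposition only when $u_iu_j$ is divisible by $x_{n-1}^2$ or by $x_n^2$.
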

\begin{proof}
	We proceed by induction on $n$. The statement holds for $n\leq 7$, see Table~\ref{Table2} for verification. Therefore let us assume that $n\geq 8$. 
	
	Observe that if $u_iu_j$ is divisible by $x_{n-1}^2$  then the result follows from Lemma~\ref{lem: order preserved for P_n-2}\,, Lemma~\ref{lem:product of all As}\,  and the induction assumption on $P_{n-2}$. If $u_iu_j$ is divisible by $x_n^2$  then the result follows from Lemmas~\ref{lem: order preserved for P_n-3}\,, \ref{lem: colon contains the second variable from the end}\,, \ref{lem:product of all Bs} and the induction assumption on $P_{n-3}$. Therefore we may assume that $x_{n-1}\mid u_i$ and $x_n\mid u_j$. Now consider the following rooted lists:
	\begin{align*}
	\MR(P_{n-4}): & \ v_1 >_{\MR}\dots >_{\MR}v_{\ell}\\
	\MR(P_{n-5}): & \ w_1>_{\MR}  \dots >_{\MR}w_m\\
	\MR(P_{n-6}): & \ p_1>_{\MR} \dots >_{\MR}p_q.
	\end{align*}
	Note that $\MR(P_n)$ is the join of the list $\mathcal{A}, \mathcal{B}, \mathcal{C},\mathcal{D}$ in Figure~\ref{branching}\, in the given order. We consider the following cases.
	
	\emph{Case 1:} Suppose that $u_j\in \mathcal{D}$. If $x_{n-4}\mid u_i$, then the result follows from Lemma~\ref{lem:sufficient condition for nonminimal gens}. So, let us assume that $x_{n-4}\nmid u_i$ as well. Observe now that $u_i\in \mathcal{A}$. Note that by Lemma~\ref{lem:properties of rooted list}~(3) we have
	$$\MR(P_{n-4})=x_{n-5}p_1,\dots ,x_{n-5}p_q, x_{n-4}w_1,\dots ,x_{n-4}w_{\alpha}$$
	for some $\alpha\leq m$. Therefore $u_i=x_{n-1}x_{n-3}x_{n-5}p_i$ and  $u_j=x_{n}x_{n-2}x_{n-3}x_{n-5}p_{\beta}$
	for some $\beta \leq q$. Clearly we have
	$$(u_1,\dots ,u_{i-1}):(u_i)=(p_1,\dots ,p_{i-1}):(p_i)$$ and $\beta\neq i$. Thus $p_{\beta}$ contains a variable generator of $(p_1,\dots ,p_{i-1}):(p_i)$. 
	
	\emph{Case 1.1:} If $\beta <i$, then observe that we can produce a new expression 
	$$\displaystyle u_iu_j= \underbrace{(x_{n-1}x_{n-3}x_{n-5}p_\beta)}_{u_\beta}\underbrace{(x_{n}x_{n-2}x_{n-3}x_{n-5}p_i)}_{u_\gamma} $$
	for some $\gamma$. Then $u_\beta >_\MR u_\gamma$ and $u_\beta >_\MR u_i$ which implies that the expression $u_iu_j$ is not maximal.  
	
	\emph{Case 1.2:} Let $\beta > i$. Then by induction assumption either $p_ip_{\beta}$ is not minimal generator of $J(P_{n-6})^2$ or the expression $p_ip_{\beta}$ is not maximal. Any minimal generator of $J(P_{n-6})^2$ which divides $p_ip_{\beta}$ or any $2$-fold expression which is greater than $p_ip_{\beta}$ can be multiplied by the appropriate variables to obtain the desired conclusion for $u_iu_j$. 
	
	\emph{Case 2:} Suppose $u_j\in\mathcal{C}$ so that $u_j=x_nx_{n-2}x_{n-4}w_s$ for some $s\geq 1$.
	
	\emph{Case 2.1:} Suppose $u_i\in\mathcal{B}$ so that $u_i=x_{n-1}x_{n-2}x_{n-4}w_t$ for some $t$. Observe that since $u_i\in\mathcal{B}$ we have $i\geq \ell+1$. We claim that $i> \ell+1$. Assume for a contradiction $i=\ell+1$. Then by Lemma~\ref{lem: colon contains the second variable from the end}\, we get
	$$(u_1,\dots ,u_{i-1}):(u_i)=(x_{n-3})$$
	which implies that $x_{n-3}\mid u_j$, a contradiction. Hence $i> \ell+1$ indeed. Applying Lemma~\ref{lem: colon contains the second variable from the end} we obtain
	$$(u_1,\dots ,u_{i-1}):(u_i)=(u_{\ell+1},\dots ,u_{i-1}):(u_i) +(x_{n-3}).   $$
	
	Now since $(u_{\ell+1},\dots ,u_{i-1}):(u_i)=(w_1,\dots, w_{t-1}):(w_t)$, there exists a variable generator of this ideal dividing $u_j$ and thus dividing $w_s$. Clearly $s\neq t$. If $s>t$, then by induction assumption on $P_{n-5}$, either $w_tw_s$ is a non-minimal generator or is not a maximal expression and the result follows as in Case $1$. Lastly, suppose that $s<t$. Then we obtain a different expression for $u_iu_j$ as follows:
		\begin{equation*}
	\begin{split}
	u_iu_j& = (x_{n-1}x_{n-2}x_{n-4}w_t)(x_nx_{n-2}x_{n-4}w_s)  \\
	& = (x_{n-1}x_{n-2}x_{n-4}w_s)(x_nx_{n-2}x_{n-4}w_t)  \\
	& = u_{s+|\mathcal{A}|}u_{t+|\mathcal{A}|+|\mathcal{B}}| \qquad \text{by Figure~\ref{branching}}\\
	& = u_{s+\ell}u_{t+\ell+m}.
	\end{split}
	\end{equation*} 
	
	Since $i=t+\ell$ we have $u_{s+\ell}>_\MR u_i$ and the expression $u_iu_j$ is not maximal.
	
	\emph{Case 2.2:} Suppose $u_i\in \mathcal{A}$ so that $u_i=x_{n-1}x_{n-3}v_i$. Observe that by Lemma~\ref{lem:properties of rooted list}~(1) the variable $x_{n-4}$ is not a generator of the ideal $$(u_1,\dots ,u_{i-1}):(u_i)=(v_1,\dots ,v_{i-1}):(v_i)$$ and $w_s$ is divisible by a variable generator of $(v_1,\dots ,v_{i-1}):(v_i)$.
	
	By induction assumption $(x_{n-3}v_i)(x_{n-2}x_{n-4}w_s)$ is either a non-minimal generator of $J(P_{n-2})^2$ or a non-maximal expression. If it is not a minimal generator, then it is divisible by some $(x_{n-3}v_{\alpha})(x_{n-2}x_{n-4}w_{\beta})$ and $v_{\alpha}w_{\beta}\mid v_iw_s$. In such case, multiplying $v_{\alpha}w_{\beta}$ by the appropriate variables one can see that $u_iu_j$ is not a minimal generator. Lastly, observe that if  $(x_{n-3}v_i)(x_{n-2}x_{n-4}w_s)$ is a non-maximal expression, then so is $u_iu_j$.
	\end{proof}

\begin{Lemma}\label{lem:pure powers stay mingen}
	Let $I$ be a squarefree monomial ideal and let $u$ be a minimal generator of $I$. Then $u^s$ is a minimal generator of $I^s$ for all $s$.
\end{Lemma}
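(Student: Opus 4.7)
The plan is to proceed by contradiction, exploiting the fact that every minimal generator of a squarefree monomial ideal is itself a squarefree monomial.

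Write $u = x_{i_1}x_{i_2}\cdots x_{i_d}$, so that $u^s = x_{i_1}^s x_{i_2}^s\cdots x_{i_d}^s$ and in particular $\supp(u^s) = \supp(u) = \{x_{i_1},\dots ,x_{i_d}\}$. Suppose, for a contradiction, that $u^s$ is \emph{not} a minimal generator of $I^s$. Then there exist $v_1,\dots ,v_s \in G(I)$ such that the product $v_1v_2\cdots v_s$ strictly divides $u^s$; here not all the $v_j$ can equal $u$, otherwise the product would equal $u^s$.

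Now I would argue that in fact each $v_j$ divides $u$. Since $v_j$ divides $v_1\cdots v_s$, which in turn divides $u^s$, we have $\supp(v_j)\subseteq \supp(u^s) = \supp(u)$. Because $I$ is a squarefree monomial ideal, every element of $G(I)$ is a squarefree monomial, so $v_j$ is squarefree with support contained in $\supp(u)$, which forces $v_j\mid u$. Minimality of $u$ in $G(I)$ then implies $v_j = u$ for every $j$, so $v_1v_2\cdots v_s = u^s$, contradicting the assumption that the divisibility was strict.

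This establishes that $u^s \in G(I^s)$. The only step that requires even mild care is the passage from $v_j\mid u^s$ to $v_j\mid u$, which is where squarefreeness of $I$ is used essentially; without it the claim would fail, as is visible already by taking $I = (x^2)$ with $u = x^2$ and $s = 2$, where $u^2 = x^4 = u\cdot u$ is minimal but $x^2$ would divide $u$ in the non-squarefree sense. No further induction or combinatorics is needed, so I expect no real obstacle.
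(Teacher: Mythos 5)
Your proof is correct and is essentially the paper's argument: both reduce to observing that any minimal generator $v_j$ of $I$ dividing $u^s$ is squarefree with support contained in $\supp(u)$, hence divides $u$, hence equals $u$ by minimality. (Your closing aside with $I=(x^2)$ is not a convincing illustration of failure without squarefreeness --- the paper's remark uses $I=(a^2bc,b^2,c^2)$ --- but this does not affect the proof itself.)
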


\begin{proof}
	Suppose $v=v_1^{a_1}\dots v_q^{a_q}\in G(I^s)$ where $v_1,\dots , v_q$ are some minimal generators of $I$ and $a_1+\dots +a_q=s$ and $a_i>0$ for all $i$. Suppose $v$ divides $u^s$. Then each $v_i$ divides $u$ since $v_i$ is squarefree. Then by minimality of $u$ we get $u=v_i$ for all $i=1,\dots ,q$.
\end{proof}

\begin{Remark}
Note that in the lemma above the squarefreeness assumption cannot be omitted. For example, if $I=(a^2bc, b^2, c^2)$ then $(a^2bc)^2\notin G(I^2)$.
\end{Remark}

The following lemma is of crucial importance to prove the main result stated in Theorem~\ref{main thm}.
 \begin{Lemma}\label{weak}
 Let $U \in F(J(P_n)^2)\setminus G(J(P_n)^2)$. Then there exists  $V\in G(J(P_n)^2)$ such that $V>_{\MR}  U$ and $V|U$.
\end{Lemma}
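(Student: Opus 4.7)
I would prove Lemma~\ref{weak} by strong induction on $n$. For $n\le 7$ the claim can be read off Table~\ref{Table2}: each $U$ appearing in the third column of that table is already paired there with an earlier member of the rooted list that divides it. For $n\ge 8$ I would write $U=u_iu_j$ in its maximal expression with $u_i\ge_{\MR} u_j$, and case on where $u_i$ and $u_j$ sit in the decomposition $\MR(P_n)=\MA\,|\,\MB\,|\,\MC\,|\,\MD$ of Figure~\ref{branching}.

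The easy cases are when $x_{n-1}^2\mid U$ or $x_n^2\mid U$ (both factors in $x_{n-1}\MR(P_{n-2})$, respectively $x_nx_{n-2}\MR(P_{n-3})$). In the first case $U=x_{n-1}^2U'$ with $U'\in F(J(P_{n-2})^2)$; Lemma~\ref{lem:product of all As} forces $U'\notin G(J(P_{n-2})^2)$, induction produces a $V'$, and Lemmas~\ref{lem: order preserved for P_n-2} and~\ref{lem:product of all As} lift it to $V=x_{n-1}^2V'$. The case $x_n^2\mid U$ is symmetric via Lemmas~\ref{lem: order preserved for P_n-3} and~\ref{lem:product of all Bs}. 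Otherwise $u_i\in\MA\cup\MB$ and $u_j\in\MC\cup\MD$, and I split into four subtypes. For $\MA\MC$, Lemmas~\ref{lem: divisor of same type}(1), \ref{lem: type AC minimal generator preserved}, and \ref{lem: type AC order preserved} reduce to $P_{n-2}$: after stripping $x_{n-1}x_n$ I apply induction to $uv\in F(J(P_{n-2})^2)\setminus G(J(P_{n-2})^2)$, use Lemma~\ref{lem: divisor of same type}(1) to see that the resulting $V'$ inherits the $\MA\MC$-shape, and lift to $V:=x_{n-1}x_nV'$. The $\MB\MC$ subtype runs in parallel via Lemmas~\ref{lem: divisor of same type}(2), \ref{lem: type BC minimal generator preserved}, and \ref{lem: type BC order preserved}, reducing to $P_{n-5}$ after factoring out $x_{n-1}x_nx_{n-2}^2x_{n-4}^2$.

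For the subtypes $\MB\MD$ and $\MA\MD$ with $x_{n-4}\mid u_i$, the hypotheses of Lemma~\ref{lem:sufficient condition for nonminimal gens} are met, so that lemma directly supplies a $pw\in F(J(P_n)^2)$ with $pw\mid U$ and $pw>_{\MR}U$; inspection of its construction shows $pw$ has $\MA\MC$-shape, so if $pw\notin G(J(P_n)^2)$ I recycle it through the $\MA\MC$ subcase (a finite recursion, since $>_{\MR}$ is a well-order on the finite rooted list). The remaining configuration is $u_i\in\MA$ with $x_{n-4}\nmid u_i$ and $u_j\in\MD$, where $u_i=x_{n-1}x_{n-3}x_{n-5}p$, $u_j=x_nx_{n-2}x_{n-3}x_{n-5}q$ with $p,q\in\MR(P_{n-6})$ and $U=x_{n-1}x_nx_{n-2}x_{n-3}^2x_{n-5}^2pq$. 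Because $U$ is free of $x_{n-4}$ and carries only one copy of each of $x_{n-1},x_n,x_{n-2}$, any $V\in G(J(P_n)^2)$ dividing $U$ must itself have this $\MA\MD$-shape with $x_{n-4}$-free factors, giving $U\in G(J(P_n)^2)\iff pq\in G(J(P_{n-6})^2)$; induction on $P_{n-6}$ then supplies a dominating $V''$, and a short computation modelled on Lemma~\ref{lem: type BC order preserved} transfers $V''>_{\MR}pq$ in $\MR(P_{n-6})$ to $V:=x_{n-1}x_nx_{n-2}x_{n-3}^2x_{n-5}^2V''>_{\MR}U$ in $\MR(P_n)$.

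The main obstacle is precisely this residual $\MA\MD$ subcase: it is not handled by Lemma~\ref{lem:sufficient condition for nonminimal gens} nor by the $\MA\MC,\MB\MC$ lifting lemmas of Section~\ref{sec:properties of generators and rooted lists}, so the proof requires an additional $\MA\MD$ lifting statement relating $J(P_{n-6})^2$ to $J(P_n)^2$ through the common factor $x_{n-1}x_nx_{n-2}x_{n-3}^2x_{n-5}^2$. Once this lifting lemma is in hand, the induction closes and the remaining work is bookkeeping.
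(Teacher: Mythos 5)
Your proposal is correct and follows essentially the same route as the paper: induction on $n$ with the same reduction of the cases $x_{n-1}^2\mid U$ and $x_n^2\mid U$ to $P_{n-2}$ and $P_{n-3}$, the same $\MA\MC$ and $\MB\MC$ lifting lemmas, Lemma~\ref{lem:sufficient condition for nonminimal gens} for the $\MB\MD$ case and the $x_{n-4}$-divisible part of $\MA\MD$, and a reduction of the residual $\MA\MD$ case to $P_{n-6}$. You are in fact slightly more explicit than the paper in two places where it is terse: the secondary descent along $>_{\MR}$ needed because Lemma~\ref{lem:sufficient condition for nonminimal gens} only returns a $2$-fold product rather than a minimal generator, and the $\MA\MD$ lifting statement through the factor $x_{n-1}x_nx_{n-2}x_{n-3}^2x_{n-5}^2$, which the paper dispatches as ``a similar argument as in Case~(c)''.
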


\begin{proof}
We will prove the assertion by applying induction on $n$. The statement holds for $n\leq 7$, see Table~\ref{Table2} for verification. Assume that $n \geq 8$.

Let $\MR(P_{n})=u_1>_{\MR}\dots >_{\MR} u_f$. Because of Lemma~\ref{lem:pure powers stay mingen}, we may assume that $U=u_iu_j$ is a maximal expression for some $i<j$.
From the Figure~\ref{branching}, which describes the branching of rooted order of minimal generators of $J(P_n)$, we see that we have the following three possibilities.  

\begin{itemize}
\item[(1)] $u_i, u_j \in x_{n-1}\MR(P_{n-2})$
\item[(2)] $u_i, u_j \in x_{n}x_{n-2}\MR(P_{n-3})$
\item[(3)] $u_i \in x_{n-1}\MR(P_{n-2})$ and  $u_j \in x_nx_{n-2}\MR(P_{n-3})$.
\end{itemize}

Since $U \notin G(J(P_n)^2)$, there exists $U' \in G(J(P_n)^2)$  such that $U'$ strictly divides $U$. Let $U' =u_pu_q$ be a maximal expression for some $p\leq q$.   Now we discuss each of the above possibilities separately. Let $\MR(P_{n-2})=v_1>_{\MR}\dots >_{\MR} v_d$ and $\MR(P_{n-3})=l_1>_{\MR}\dots >_{\MR} l_e$.

(1): Let $u_i, u_j \in x_{n-1}\MR(P_{n-2})$. Then, $u_p, u_q \in x_{n-1}\MR(P_{n-2})$ because $U'|U$. Also, in this case we have $U=(x_{n-1}v_{i'})(x_{n-1}v_{j'})$ and $U'=(x_{n-1}v_{p'})(x_{n-1}v_{q'})$ for some $v_{i'},v_{j'},v_{p'},v_{q'} \in  \MR(P_{n-2})$. Then the monomial $v_{p'}v_{q'}$ strictly divides $v_{i'}v_{j'}$. By induction hypothesis on $P_{n-2}$, there exists $v_{r'}v_{s'}\in  G(J(P_{n-2})^2)$ such that $v_{r'}v_{s'}| v_{i'}v_{j'}$ and $v_{r'}v_{s'}>_{\MR}  v_{i'}v_{j'}$. Let $V=(x_{n-1}v_{r'})(x_{n-1}v_{s'})$. By Lemma~\ref{lem:product of all As}, we see that $V \in G(J(P_{n})^2)$. Note that  $V|U$ and by following Lemma~\ref{lem: order preserved for P_n-2} we get $V >_{\MR}  U$, as required. 
 
(2): Let $u_i, u_j \in x_{n}x_{n-2}\MR(P_{n-3})$. Then, $u_p, u_q \in x_{n}x_{n-2}\MR(P_{n-3})$ because $U'|U$. Also, in this case we have $U=(x_nx_{n-2}l_{i'})(x_nx_{n-2}l_{j'})$ and $U'=(x_nx_{n-2}l_{p'})(x_nx_{n-2}l_{q'})$ for some $l_{i'},l_{j'},l_{p'},l_{q'} \in  \MR(P_{n-3})$. Then the monomial $l_{p'}l_{q'}$ strictly divides $l_{i'}l_{j'}$. By induction hypothesis on $P_{n-3}$, there exists $l_{r'}l_{s'}\in  G({J(P_{n-3})}^2)$ such that $l_{r'}l_{s'}| l_{i'}l_{j'}$ and $l_{r'}l_{s'}>_{\MR} l_{i'}l_{j'}$. Let $V=(x_nx_{n-2}l_{r'})(x_nx_{n-2}l_{s'})$. Also, by Lemma~\ref{lem:product of all Bs}, we see that $V \in G(J(P_{n})^2)$. Note that $V|U$ and by following Lemma~\ref{lem: order preserved for P_n-3} we get $V >_{\MR}  U$, as required.

(3): If $u_i \in x_{n-1}\MR(P_{n-2})$ and  $u_j \in x_nx_{n-2}\MR(P_{n-3})$ then again from Figure~\ref{branching}, we see that either $u_i \in \MA$ or $u_i \in \MB$, and either $u_j \in \MC$ or $u_j \in \MD$. We list these four cases in the following way.  
 
 \begin{itemize}
 \item[(a)] $u_i \in \MA$ and $u_j \in \MC$ ;
 \item[(b)] $u_i \in \MA$ and $u_j \in \MD$;
 \item[(c)] $u_i \in \MB$ and $u_j \in \MC$;
 \item[(d)] $u_i \in \MB$ and $u_j \in \MD$.
 \end{itemize}
 We define the following rooted lists:
 \begin{align*}
 \MR(P_{n-4})= & \ a_1>_{\MR}\dots >_{\MR} a_g \\
 \MR(P_{n-5})= &\ b_1>_{\MR}\dots >_{\MR} b_h \\
 \MR(P_{n-6})= &\ c_1>_{\MR}\dots >_{\MR} c_k.
 \end{align*}
\emph{Case (a):} If $u_i \in \MA$ and $u_j \in \MC$, then $U=(x_{n-1}x_{n-3}a_{i'})(x_nx_{n-2}x_{n-4} b_{j'} )$ for some $a_{i'} \in \MR(P_{n-4})$ and $b_{j'} \in \MR(P_{n-5})$. Since $U'|U$, by Lemma~\ref{lem: divisor of same type} we get $u_p \in \MA$ and $u_q \in \MC$. Then $U'=(x_{n-1}x_{n-3}a_{p'})(x_nx_{n-2}x_{n-4} b_{q'} )$ for some $a_{p'} \in \MR(P_{n-4})$ and $b_{q'} \in \MR(P_{n-5})$. Moreover, $U'|U$ gives $a_{p'} b_{q'} | a_{i'} b_{j'}$.
 
 Note that $x_{n-3}a_{i'}, x_{n-3}a_{p'},  x_{n-2}x_{n-4}b_{j'},x_{n-2}x_{n-4}b_{q'} \in \MR(P_{n-2})$ and the monomial 
  $(x_{n-3}a_{p'})(x_{n-2}x_{n-4}b_{q'})$ strictly divides $Y=(x_{n-3}a_{i'})(x_{n-2}x_{n-4}b_{j'}) $ which shows that $ Y \in F(J(P_{n-2})^2)\setminus G( J(P_{n-2})^2)$. Then by induction hypothesis on $P_{n-2}$, we know that there exists  $Y'\in  G(J(P_{n-2})^2)$ such that $Y'| Y$ and $Y' >_{\MR}  Y$.
  
   Observe that $Y'=(x_{n-3}a_{i''})(x_{n-2}x_{n-4}b_{j''})$ for some $a_{i''}\in \MR(P_{n-4})$ and $b_{j''}\in \MR(P_{n-5})$. Let $V=x_nx_{n-1}Y'$ and note that from Lemma~\ref{lem: type AC minimal generator preserved}, we have $V \in G(J(P_n)^2)$. Clearly $V$ divides $U$. From Lemma~\ref{lem: type AC order preserved} it follows that $V >_{\MR}  U$ as desired.

 \emph{Case (b):} If $u_i \in \MA$ and $u_j \in \MD$, then $U=(x_{n-1}x_{n-3}a_{i'} )(x_n x_{n-2}x_{n-3}x_{n-5} c_{j'})$ for some $a_{i'} \in \MR(P_{n-4})$ and $c_{j'} \in \MR (P_{n-6})$. Since $U'|U$, and $U'=u_p u_q$ one can see that $u_p \in  \MA$. Also, $u_q \in \MC$ or $u_q \in \MD$.
 
 If  $u_p \in  \MA$ and $u_q \in \MC$, then $U'= (x_{n-1}x_{n-3} a_{p'})(  x_nx_{n-2}x_{n-4} b_{q'})$  for some $a_{p'} \in \MR(P_{n-4})$ and $b_{q'} \in \MR (P_{n-5})$. Then, $U'|U$ gives
 	\[
 	(x_{n-1}x_{n-3} a_{p'})(  x_nx_{n-2}x_{n-4} b_{q'}) | (x_{n-1}x_{n-3}a_{i'} )(x_n x_{n-2}x_{n-3}x_{n-5} c_{j'})
 	\]
 	which implies
 	\[
 	x_{n-4} a_{p'} b_{q'}| x_{n-3}x_{n-5} a_{i'} c_{j'}.
 	\]
 	Therefore $x_{n-4} | a_{i'}$ because $c_{j'} \in \MR (P_{n-6})$.  It shows that $x_{n-1}x_{n-4}\mid u_i$. Then by Lemma~\ref{lem:sufficient condition for nonminimal gens}, we get the desired result.

 If  $u_p \in \MA$ and $u_q \in \MD$, then $U'=(x_{n-1}x_{n-3}a_{p'} )(x_n x_{n-2}x_{n-3}x_{n-5} c_{q'})$ for some $a_{p'} \in \MR(P_{n-4})$ and $c_{q'} \in \MR(P_{n-6})$.
 \begin{figure}[htbp]
 	\includegraphics[width = 15cm]{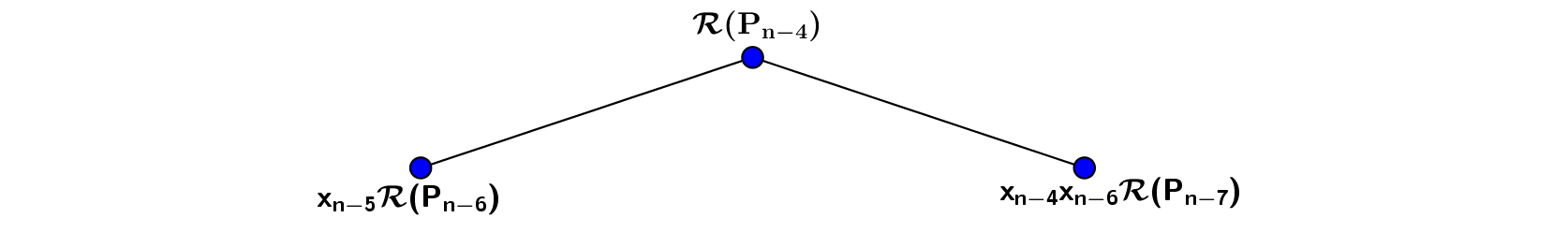}
 	\caption{Branching of $\MR(P_{n-4})$}
 	\label{branching2}
 \end{figure}
 Keeping Figure~\ref{branching2} in mind, one can check that either $x_{n-4}$ divides $a_{i'}$ or $a_{i'} \in x_{n-5} \MR(P_{n-6})$. If $x_{n-4}$ divides $a_{i'}$ then $u_i$ is divisible by $x_{n-1}x_{n-4}$ and the result follows from Lemma~\ref{lem:sufficient condition for nonminimal gens}.
 
 Therefore let us assume that $a_{i'} \in x_{n-5} \MR(P_{n-6})$. Then it is not hard to show that $a_{p'} \in x_{n-5} \MR(P_{n-6})$ as well. Then a similar argument as in Case~(c) shows that we can find $V \in G(J(P_{n})^2)$ such that  $V|U$ and $V>_{\MR} U$.

\emph{Case (c):}
If $u_i \in \MB$ and $u_j \in \MC$, then $u_i=x_{n-1} x_{n-2}x_{n-4}b_{i'}$ and $u_j=x_n x_{n-2}x_{n-4} b_{j'}$ for some $b_{i'}, b_{j'}  \in \MR(P_{n-5})$. Since $U'|U$, it follows from Lemma~\ref{lem: divisor of same type}~(2) that $U'=(x_{n-1} x_{n-2}x_{n-4}b_{p'} )(x_n x_{n-2}x_{n-4} b_{q'})$ for some $b_{p'}, b_{q'}  \in \MR(P_{n-5})$. Moreover, $b_{p'} b_{q'}$ strictly divides $b_{i'} b_{j'}$. This shows that $b_{i'} b_{j'} \in F(J(P_{n-5})^2)\setminus  G(J(P_{n-5})^2)$.   Then by induction hypothesis, we know that there exists  $Y\in G( J(P_{n-5})^2)$ such that $Y| b_{i'} b_{j'} $ and $Y>_{\MR} b_{i'} b_{j'}$. Then by Lemma~\ref{lem: type BC order preserved} we get 
\[x_nx_{n-1}x_{n-2}^2x_{n-4}^2Y >_\MR x_nx_{n-1}x_{n-2}^2x_{n-4}^2b_{i'}b_{j'} \ \text{ in } F(J(P_{n})^2).\]
Now, setting $V=x_nx_{n-1}x_{n-2}^2x_{n-4}^2Y $, the line above becomes
\[V>_\MR U \text{ in } F(J(P_{n})^2).\]
Clearly $V|U$. Because of Lemma~\ref{lem: type BC minimal generator preserved} we get $V \in G(J(P_n)^2)$ as desired. 

\emph{Case (d):} If $u_i \in \MB$ and $u_j \in \MD$, then  by Lemma ~\ref{lem:sufficient condition for nonminimal gens}, we get the desired result. 
\end{proof}
\section{Linear quotients of second power of $J(P_n)$}\label{sec:main results}
We are now ready to prove our main theorem.

\begin{Theorem}\label{main thm}
Let $G(J(P_n)^2)=\{U_1, \ldots, U_p\}$. Then $J(P_n)^2$ has linear quotients with respect to $U_1 >_{\MR} \ldots >_{\MR} U_p$.
\end{Theorem}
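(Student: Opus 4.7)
The plan is to verify the linear-quotients condition directly by showing that for each $i \geq 2$ and every $\ell < i$, there is a variable $x$ dividing $U_\ell : U_i$ which also belongs to the colon ideal $(U_1,\dots,U_{i-1}) : U_i$; the latter membership will be secured by producing an index $\ell' < i$ with $U_{\ell'}$ dividing $xU_i$. Fix the maximal expressions $U_i = u_j u_k$ with $j \leq k$ and $U_\ell = u_a u_b$ with $a \leq b$. The assumption $U_\ell >_{\MR} U_i$ forces either (i) $a = j$ and $b < k$, or (ii) $a < j$.

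In case (i) one has $U_\ell : U_i = u_b : u_k$. Since $J(P_n)$ itself has linear quotients in the rooted order (Remark~\ref{rk:rooted order linear quotients}), the colon $(u_1,\dots,u_{k-1}) : u_k$ is generated by variables, so I pick a variable $x$ dividing $u_b : u_k$ inside this colon. Then some $u_c$ with $c < k$ divides $xu_k$, whence $u_j u_c$ divides $xU_i$. Writing $u_\alpha u_\beta$ for the maximal expression of $u_j u_c$, one has $\alpha \leq \min(j,c) \leq j$; if $\alpha < j$ then $u_\alpha u_\beta >_{\MR} u_j u_k$ by lex on the first coordinate, while if $\alpha = j$ then $\beta = c < k$ and the same conclusion holds. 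If $u_j u_c \in G(J(P_n)^2)$ it is the desired $U_{\ell'}$; otherwise Lemma~\ref{weak} produces $V \in G(J(P_n)^2)$ dividing $u_j u_c$ with $V >_{\MR} u_j u_c >_{\MR} U_i$, and $V$ serves as $U_{\ell'}$.

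Case (ii) is the delicate one. Linear quotients of $J(P_n)$ provide a variable $y$ dividing $u_a : u_j$ which lies in $(u_1,\dots,u_{j-1}) : u_j$; in particular $y \nmid u_j$ and some $u_c$ with $c < j$ divides $yu_j$. The crucial step is to rule out $y \mid u_k$: when $j = k$ this is automatic from $y \nmid u_j$, and when $j < k$ I invoke Proposition~\ref{prop:condition for non-minimal gen or non-maximal expression}, which, applied to the pair $(j,k)$, says that because $U_i = u_j u_k$ is simultaneously a maximal expression and a minimal generator, $u_k$ contains no variable from $(u_1,\dots,u_{j-1}) : u_j$. Hence $y \nmid u_j u_k$ and $y \mid u_a$, so $y \mid U_\ell : U_i$. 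Meanwhile $u_c u_k$ divides $yU_i$, and since $c < j \leq k$ its maximal expression begins with an index $\leq c < j$, placing it strictly before $U_i$ in the rooted order; as in case (i), Lemma~\ref{weak} finishes the job if $u_c u_k$ is not already in $G(J(P_n)^2)$.

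The main obstacle is case (ii): without the conclusion $y \nmid u_k$, the variable $y$ produced by the linear quotients of $J(P_n)$ could fail to divide $U_\ell : U_i$. Proposition~\ref{prop:condition for non-minimal gen or non-maximal expression} is tailored precisely to rule out this configuration, and Lemma~\ref{weak} is what allows an arbitrary $2$-fold product divisor of $xU_i$ or $yU_i$ to be promoted to a genuine minimal generator still preceding $U_i$ in the rooted order. Once these two technical ingredients are in hand, the remaining bookkeeping on maximal expressions and lex comparisons is routine.
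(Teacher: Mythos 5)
Your proof is correct, and it takes a genuinely different route from the paper's. The paper argues by induction on $n$, splitting according to which branch of the recursive rooted list the two factors of $U_r$ come from (whether $x_{n-1}^2$, $x_n^2$ or $x_{n-1}x_n$ divides $U_r$): the first two cases are reduced to $P_{n-2}$ and $P_{n-3}$ via the transfer lemmas (Lemmas~\ref{lem: order preserved for P_n-2}--\ref{lem:product of all Bs}), and only the mixed case is handled head-on, through three Claims and a final appeal to Proposition~\ref{prop:condition for non-minimal gen or non-maximal expression}. You instead give a single, non-inductive index argument: comparing the maximal expressions $U_\ell=u_au_b$ and $U_i=u_ju_k$, you split into $a=j,\,b<k$ versus $a<j$, extract the needed variable from the linear quotients of $J(P_n)$ itself (Remark~\ref{rk:rooted order linear quotients}), use Proposition~\ref{prop:condition for non-minimal gen or non-maximal expression} to rule out that variable dividing $u_k$ in the delicate case $a<j$, and use Lemma~\ref{weak} to promote the resulting $2$-fold product divisor of $xU_i$ to an honest minimal generator preceding $U_i$. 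I checked the details (the identity $u_ju_b:u_ju_k=u_b:u_k$, the fact that a maximal expression minimizes the first index so that $u_\alpha u_\beta>_{\MR}U_i$, the vacuity of case (i) when $j=k$, and the applicability of the Proposition since $a<j$ forces $j>1$) and they all go through; your case (ii) is essentially the final paragraph of the paper's Case 3, shown to work uniformly. What you gain is a shorter, more uniform proof at the theorem level, with all the induction confined to the two technical workhorses; what the paper's route buys is an explicit recursive description of the colon ideals (e.g.\ $(U_1,\dots,U_{r-1}):(U_r)=(x_{n-1})+(L_1,\dots,L_{t-1}):(L_t)$ in its Case 1), which mirrors the recursive construction they hope to extend to chordal graphs.
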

\begin{proof} We will prove the assertion by applying induction on $n$. The statement holds for $n\leq 5$, see Table~\ref{Table2} for verification. Suppose that $n\geq 5$. We need to show that  $(U_1,\dots ,U_{r-1}):(U_r)$ is generated by variables, for all $2 \leq r \leq p$. Let $\MR(P_{n-2})=m_1>_{\MR}\dots >_{\MR} m_a$ and $\MR(P_{n-3})=l_1>_{\MR}\dots >_{\MR} l_b$.
	

\emph{Case 1:} Suppose that $x_n^2$ divides $U_r$. Let us assume that $U_r$ has the maximal expression $U_r=(x_n x_{n-2} l_i)(x_n x_{n-2} l_j)$ for some $l_i, l_j \in  \MR(P_{n-3})$ with $i\leq j$. First, we claim that $x_{n-1}$ is a generator of $(U_1,\dots ,U_{r-1}):(U_r)$. 

 By Lemma~\ref{lem:every b contains some a} there exits $ m_q \in \MR(P_{n-2})$  such that $m_q|x_{n-2} l_i$.  Let $$V=(x_{n-1}m_q)(x_nx_{n-2}l_j).$$ Observe that $V:U_r=x_{n-1}$. Moreover, if $V \in G(J(P_n)^2)$, then $V >_{\MR} U_r$. Otherwise, by Lemma~\ref{weak}, there exist $U_k$ with $1 \leq k \leq r-1$ such that $U_k |V$ and $U_k >_{\MR} V$. Then $U_k : U_r= x_{n-1}$ which proves the claim. 

Let $\MR(J(P_{n-3})^2)=L_1>_{\MR} L_2>_{\MR}\dots >_{\MR}L_s$ and $L_t=l_il_j$. 
Now we will show that
\[
(U_1,\dots ,U_{r-1}):(U_r) =(x_{n-1})+(L_1,L_2,\dots, L_{t-1}):(L_t).
\]
Observe that the proof will be complete once we prove the equality above because of induction assumption on $P_{n-3}$. Combining Lemma~\ref{lem: order preserved for P_n-3}, Lemma~\ref{lem:product of all Bs} and the claim that has been proved, we obtain
\[
(x_{n-1})+(L_1,L_2,\dots, L_{t-1}):(L_t) \subseteq (U_1,\dots ,U_{r-1}):(U_r).
\]

It remains to show that the reverse inclusion holds. Note that for each $\ell \leq r-1$, the monomial $U_{\ell}$ is divisible by either $(x_{n-2}x_n)^2$ or $x_{n-1}$ because of definition of rooted order. If $x_{n-1} | U_{\ell}$, then it is easy to see that in this case $U_{\ell}:U_r \in (x_{n-1})$. If $(x_{n-2}x_n)^2 | U_{\ell}$, then by Lemma~\ref{lem:product of all Bs}, we have $U_{\ell}/(x_{n-2}x_n)^2=L_k$ for some $k$. Clearly we have $U_{\ell}:U_r=L_k:L_t$. Furthermore, because $U_{\ell} >_{\MR} U_{r}$, by Lemma~\ref{lem: order preserved for P_n-3} we get $L_k >_{\MR} L_t$ which completes the proof in this case.


\medskip
\emph{Case 2:} Suppose that $x_{n-1}^2$ divides $U_r$. Let $U_r=(x_{n-1}m_i)(x_{n-1}m_j)$ be the maximal expression for some $m_i, m_j \in \MR(P_{n-2})$ with $i\leq j$. Then the monomial $m_im_j$ is also in its maximal expression by Remark~\ref{rk:max expression preserved P_{n-2}}. Then Lemma~\ref{lem:product of all As} implies $m_im_j \in G(J(P_{n-2})^2)$. 
	Let $\MR(J(P_{n-2})^2)=M_1>_{\MR}\dots >_{\MR} M_s$. 
	Then $m_im_j=M_t$, for some $1 < t \leq s$. Note that $1<t$, because if $t=1$ then $r=1$ which is not true. By induction hypothesis, $(M_1,\dots , M_{t-1}):(M_t)$ is generated by variables. We claim that
\[
(M_1,\dots ,M_{t-1}):(M_t)=(U_1,\dots ,U_{r-1}):(U_r).
\]

By Remark~\ref{rk:max expression preserved P_{n-2}} and Lemma~\ref{lem:product of all As} it is clear that
\[
(M_1,\dots ,M_{t-1}):(M_t)  \subseteq (U_1,\dots ,U_{r-1}):(U_r) .
\]
	
We need to show the reverse inclusion. Observe that for every $\ell \leq r-1$, the monomial $U_{\ell}$ is divisible by either $x_{n-1}^2$ or $x_{n-1}x_n$ because of definition of rooted order. If $x_{n-1}^2$ divides $U_{\ell}$, then again by Lemma~\ref{lem:product of all As} we get $U_{\ell}/x^2_{n-1}=M_k$ for some $k$. Clearly, $U_{\ell}:U_r=M_k:M_t$. Therefore, it remains to show that $k<t$. Lemma~\ref{lem: order preserved for P_n-2} together with $U_{\ell} >_{\MR} U_{r}$ implies $M_k >_{\MR} M_t$ as desired.

If  $x_{n-1}x_n$ divides $U_{\ell}$, then we may assume that $U_{\ell}=(x_{n-1}m_h)(x_nx_{n-2}l_q)$ is the maximal expression for some $m_h \in \MR(P_{n-2})$ and $l_q \in \MR(P_{n-3})$. Then by Lemma~\ref{lem:every b contains some a}, there exists $m_v \in \MR(P_{n-2})$ such that $m_v | x_{n-2}l_q$.

Note that since $U_{\ell}>_{\MR}U_r$ we must have $m_h>_{\MR} m_i,m_j$ in $\MR(P_{n-2})$ by Lemma~\ref{lem: order preserved for P_n-2}. Now consider 
\[ P=(x_{n-1}m_h)(x_{n-1} m_v).
\]
	
If $P \in G(J(P_n)^2)$, then $P>_{\MR}U_r$ and $P:U_r\in(M_1,\dots ,M_{t-1}):(M_t)$. Since $P:U_r$ divides $U_{\ell}:U_r$ it follows that $U_{\ell}:U_r\in (M_1,\dots ,M_{t-1}):(M_t)$.
	
If $P \notin G(J(P_n)^2)$, then by Lemma~\ref{weak}, there exists  $U_\alpha \in G(J(P_n)^2)$ such that $U_\alpha|P$ and $U_\alpha>_{\MR} P$. Thus $U_\alpha>U_r$ and $U_\alpha:U_r\in(M_1,\dots ,M_{t-1}):(M_t)$. Since $U_\alpha:U_r$ divides $P:U_r$ and $P:U_r$ divides $U_{\ell}:U_r$, we have $U_\alpha:U_r$ divides $U_{\ell}:U_r$ and $U_{\ell}:U_r\in (M_1,\dots ,M_{t-1}):(M_t)$ as desired.
\medskip


\emph{Case 3:} Suppose that $x_n x_{n-1}$ divides $U_r$. Let $U_r=(x_{n-1}m_i)(x_nx_{n-2}l_j)$ be the maximal expression for some $m_i \in  \MR(P_{n-2})$ and $l_j \in  \MR(P_{n-3})$.
	
\underline{Claim 1:} $x_{n-1}\in (U_1,\dots ,U_{r-1}):(U_r)$.

\underline{Proof of Claim 1:} By Lemma~\ref{lem:every b contains some a} there exists $m_k \in \MR(P_{n-2})$ such that $m_k|x_{n-2}l_j$. Take $M=(x_{n-1}m_k)(x_{n-1}m_i) \in F(J(P_n)^2)$. If $M \in G(J(P_n)^2)$  then $M>_{\MR}U_r$ and $M:U_r=x_{n-1}$, which proves the claim. If $M \not\in G(J(P_n)^2)$, then by Lemma~\ref{weak} there exists $U_s \in  G(J(P_n)^2)$ such that $U_s \mid M$ and $U_s>_{\MR} M$. Thus $U_s>_{\MR}U_r$ and $U_s:U_r=x_{n-1}$, which proves our claim.

\underline{Claim 2:} If $i\geq 2$, we have $(m_1,\dots ,m_{i-1}):(m_i)\subseteq  (U_1,\dots ,U_{r-1}):(U_r)$.

\underline{Proof of Claim 2:} By Remark~\ref{rk:rooted order linear quotients} the ideal $(m_1,\dots ,m_{i-1}):(m_i)$ is generated by variables. To prove our claim, let $t<i$ such that $m_t:m_i=x_z$ for some variable $x_z$. Then consider $M=(x_{n-1}m_t)(x_nx_{n-2}l_j)$. If $M \in G(J(P_n)^2)$, then $M>_{\MR}U_r$ and $M:U_r=x_z$. Otherwise by Lemma~\ref{weak} there exists  $U_k \in  G(J(P_n)^2)$ such that $U_k \mid M$ and $U_k>_{\MR} M$.  Thus $U_k>_{\MR}U_r$ and $U_k:U_r=x_z$ which proves our claim. 

\underline{Claim 3:} If $j\geq 2$, then $(l_1,\dots ,l_{j-1}):l_j\subseteq  (U_1,\dots ,U_{r-1}):(U_r)$.

\underline{Proof of Claim 3:} By Remark~\ref{rk:rooted order linear quotients} the ideal $(l_1,\dots ,l_{j-1}):(l_j)$ is generated by variables.	Let $t<j$ such that $l_t:l_j=x_z$ for some variable $x_z$. Then consider $M=(x_{n-1}m_i)(x_n x_{n-2} l_t)$. If $M \in G(J(P_n)^2)$, then $M>_{\MR}U_r$ and $M:U_r=x_z$. Otherwise by Lemma~\ref{weak} there exists  $U_k \in  G(J(P_n)^2)$ such that $U_k \mid M$ and $U_k>_{\MR} M$.  Thus $U_k>_{\MR}U_r$ and $U_k:U_r=x_z$ which proves our claim. 

Let $t<r$. By Claim 1 and Claim 3, we may assume that $U_t=(x_{n-1}m_p)(x_nx_{n-2}l_q)$ and $p<i$. By Remark ~\ref{rk:rooted order linear quotients} there exists a variable $x_z\in (m_1,\dots, m_{i-1}):(m_{i})$ such that $x_z$ divides $m_p:m_i$. Observe that by Lemma~\ref{lem:properties of rooted list} (1) we have $x_z\neq x_{n-2}$. Proposition~\ref{prop:condition for non-minimal gen or non-maximal expression} implies the monomial $l_j$ is not divisible by $x_z$ as $U_r\in G(J(P_n)^2)$ and the expression $U_r=(x_{n-1}m_i)(x_nx_{n-2}l_j)$ is maximal. Thus $x_z$ divides $U_t:U_r$ and the result follows from Claim 2.
\end{proof}

\begin{Lemma}\label{lem:max degree of gen}
Let $a_n$ denote the maximum degree of a minimal monomial generator of $J(P_n)$. For any $n\geq 5$ we have
$$a_n=\max\{a_{n-2}+1, a_{n-3}+2\}. $$
For any $n\geq 2$
\begin{equation*}
a_n=
\begin{cases*}
2k & if $n=3k+1$ or $n=3k$ \\
2k+1        & if $n=3k+2$.
\end{cases*}
\end{equation*}
\end{Lemma}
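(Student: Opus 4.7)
The plan is to derive both statements in a single induction on $n$, with the recurrence obtained for free from the structural Lemma~\ref{lem: rooted order for first power} and the closed form obtained from a routine case analysis modulo $3$.

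For the recurrence $a_n = \max\{a_{n-2}+1, a_{n-3}+2\}$: I would simply invoke the disjoint decomposition
\[ G(J(P_n)) = x_{n-1}\, G(J(P_{n-2})) \;\sqcup\; x_n x_{n-2}\, G(J(P_{n-3})) \]
from Lemma~\ref{lem: rooted order for first power}. Any minimal generator of $J(P_n)$ has degree either $1 + \deg u$ with $u \in G(J(P_{n-2}))$, or $2 + \deg v$ with $v \in G(J(P_{n-3}))$. Maximizing over each piece and then over the two pieces yields the claimed equality, and it is realized by choosing a maximum-degree generator on the relevant side.

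For the closed form, I would use induction on $n$, with base cases $n=2,3,4$ read off directly from $\MR(P_2), \MR(P_3), \MR(P_4)$ in Table~\ref{Table1}, giving $a_2=1$, $a_3=2$, $a_4=2$, in agreement with the formula. For the inductive step, split by the residue of $n$ modulo $3$. If $n=3k$, then $n-2 \equiv 1$ and $n-3 \equiv 0 \pmod 3$, so by the inductive hypothesis $a_{n-2}+1 = 2k-1$ and $a_{n-3}+2 = 2k$, and the recurrence gives $a_n = 2k$. If $n=3k+1$, then $n-2 \equiv 2$ and $n-3 \equiv 1 \pmod 3$, giving $a_{n-2}+1 = 2k$ and $a_{n-3}+2 = 2k$, hence $a_n = 2k$. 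If $n=3k+2$, then $n-2 \equiv 0$ and $n-3 \equiv 2 \pmod 3$, giving $a_{n-2}+1 = 2k+1$ and $a_{n-3}+2 = 2k+1$, hence $a_n = 2k+1$.

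There is no real obstacle here; the only point worth checking carefully is that the base cases align with the three residue classes and that the indices $n-2$ and $n-3$ land in residue classes for which the inductive hypothesis already has the right form. Once that bookkeeping is done, the three case computations are immediate.
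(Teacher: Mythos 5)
Your proof is correct and takes essentially the same route as the paper, whose entire proof is the one-line remark that the result follows from Lemma~\ref{lem: rooted order for first power}; you have simply filled in the degree bookkeeping and the mod-$3$ induction that the authors left implicit. The base cases $a_2=1$, $a_3=2$, $a_4=2$ and the three residue-class computations all check out.
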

\begin{proof}
The result follows from Lemma \ref{lem: rooted order for first power}.
\end{proof}

If an ideal has linear quotients, then its regularity is equal to the highest degree of a generator in a minimal set of generators, see \cite[Corollary~8.2.14]{herzog hibi monomial ideals}. Therefore as a consequence of Theorem~\ref{main thm} we obtain the following result.
\begin{Corollary} For any $n\geq 2$
\begin{equation*}
    \reg(J(P_n)^2)=
    \begin{cases*}
      4k & if $n=3k+1$ or $n=3k$ \\
      4k+2        & if $n=3k+2$.
    \end{cases*}
  \end{equation*}
  \end{Corollary}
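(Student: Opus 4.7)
The plan is to derive the regularity formula as a direct consequence of the linear quotients result combined with the degree computation from Lemma~\ref{lem:max degree of gen}. Since $J(P_n)^2$ has linear quotients by Theorem~\ref{main thm}, and since an ideal with linear quotients has regularity equal to the maximum degree of a minimal monomial generator (\cite[Corollary~8.2.14]{herzog hibi monomial ideals}, as the excerpt recalls just before stating the corollary), the task reduces to determining
$$\max\{\deg(U) : U \in G(J(P_n)^2)\}.$$

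First, I would show the upper bound: every minimal generator $U$ of $J(P_n)^2$ has a 2-fold expression $U = u \cdot v$ with $u,v \in G(J(P_n))$, hence $\deg(U) \le 2a_n$, where $a_n$ denotes the maximum degree of a minimal generator of $J(P_n)$ as in Lemma~\ref{lem:max degree of gen}. Next I would establish the matching lower bound. Let $u \in G(J(P_n))$ be a minimal generator of maximum degree $a_n$. Since $J(P_n)$ is squarefree, Lemma~\ref{lem:pure powers stay mingen} applied with $s=2$ gives $u^2 \in G(J(P_n)^2)$, which has degree $2a_n$. Combining these two bounds yields $\reg(J(P_n)^2) = 2a_n$.

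Finally I would substitute the closed-form value of $a_n$ from Lemma~\ref{lem:max degree of gen}: when $n = 3k$ or $n = 3k+1$ we have $a_n = 2k$, so $\reg(J(P_n)^2) = 4k$; when $n = 3k+2$ we have $a_n = 2k+1$, so $\reg(J(P_n)^2) = 4k+2$. This exactly reproduces the piecewise formula in the statement.

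There is no real obstacle: every ingredient is already in the paper. The only point that needs a moment of care is justifying that the upper bound $2a_n$ is actually attained among \emph{minimal} generators of $J(P_n)^2$ rather than merely among 2-fold products in $F(J(P_n)^2)$; this is precisely what Lemma~\ref{lem:pure powers stay mingen} provides via the squarefreeness of $J(P_n)$, and it is the reason we can take the pure square $u^2$ of a maximum-degree generator without worrying that it might be non-minimal.
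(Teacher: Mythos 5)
Your proposal is correct and matches the paper's own argument: both use that linear quotients (Theorem~\ref{main thm}) reduces the regularity to the maximum generating degree, obtain the lower bound $2a_n$ via Lemma~\ref{lem:pure powers stay mingen} applied to a maximum-degree generator, and conclude by substituting the formula for $a_n$ from Lemma~\ref{lem:max degree of gen}. Your explicit remark on why the bound is attained among \emph{minimal} generators is a slightly more careful write-up of the same reasoning the paper leaves implicit.
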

\begin{proof}
	If $u\in G(J(P_n))$, then $u^2\in G(J(P_n)^2)$ by Lemma~\ref{lem:pure powers stay mingen}. The result follows from Lemma \ref{lem:max degree of gen}.
\end{proof}
\subsection{Concluding Remarks}

We can generalize the concept of rooted list to chordal graphs as follows. First, let us introduce some notation. If $v$ is a vertex of $G$, then the set of neighbors of $v$ is denoted by $N(v)$. The closed neighborhood of $v$ is $N[v]=N(v)\cup \{v\}$. If $A$ is a subset of vertices of $G$, then $G\setminus A$ denotes the graph which is obtained from $G$ by removing the vertices in $A$.

Suppose that $G$ is a chordal graph with a simplicial vertex $v_1$ such that $N[v_1]=\{v_1,\dots,v_r\}$ for some $r\geq 2$. Suppose that for each $i=1,\dots, r$, the list $\MR(H_i)$ is a rooted list of the subgraph $H_i=G\setminus N[v_i]$. Then we say 
$$ \MR(H_1)N(v_1),\MR(H_2)N(v_2),\dots ,\MR(H_r)N(v_r)$$
is a rooted list of $G$. Note that this list indeed consists of the minimal generators of $J(G)$, see \cite[Theorem~3.1]{erey}.

Observe that a path graph has only two simplicial vertices, namely the vertices at both ends of the path. However, a chordal graph in general can have many simplicial vertices. Therefore one can construct rooted lists of chordal graphs recursively in different ways. Below we give an example of how to construct a rooted list for a chordal graph. 

\begin{Example}
	Consider the graph in Figure~\ref{graph}. Observe that $a$ is a simplicial vertex with $N(a)=\{b,c\}$. We use the following rooted lists:
	$$\MR(G\setminus N[a])= de, ef, df \quad \MR(G\setminus N[b])=\emptyset \quad \MR(G\setminus N[c])=d,f. $$
	
	Also since we have $N(a)=bc$, $N(b)=acde$ and $N(c)=abe$ we can get the following rooted list for $G$:
	$$u_1, u_2, u_3, u_4, u_5, u_6:= bcde, bcef, bcdf, acde, abed, abef. $$
	Notice that since $J(G)$ is generated in single degree, every $2$-fold product $u_iu_j$ is a minimal generator of $J(G)^2$. There is only one minimal generator of $J(G)^2$ which has multiple expressions, namely
	$$ab^2cde^2f=u_1u_6=u_2u_5. $$
	Using Macaulay2 we listed the minimal generators of $J(G)^2$ in the rooted order as in Definition~\ref{def:rooted for second power} and we confirmed that such order yields linear quotients.
\end{Example}
\begin{figure}[hbt!]
	\centering
	\begin{tikzpicture}
	[scale=1.00, vertices/.style={draw, fill=black, circle, minimum size = 4pt, inner sep=0.5pt}, another/.style={draw, fill=black, circle, minimum size = 2.5pt, inner sep=0.1pt}]
	\node[another, label=left:{$a$}] (a) at (-2,0) {};
	\node[another, label=above:{$b$}] (b) at (-1, 1) {};
	\node[another, label=below:{$c$}] (c) at (-1, -1) {};
	\node[another, label=below:{$e$}] (e) at (1,-1) {};
	\node[another, label=above:{$d$}] (d) at (1,1) {};
	\node[another, label=above:{$f$}] (f) at (2.5, 0) {};
	\foreach \to/\from in {a/b, a/c, b/c, b/e, c/e, b/d, d/e, d/f, e/f}
	\draw [-] (\to)--(\from);
	\end{tikzpicture}
	\caption{\label{graph} A chordal graph}
\end{figure}
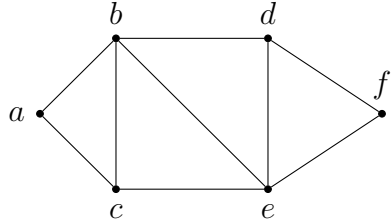
It would be interesting to know if the following question has a positive answer because it would settle the case of Conjecture~\ref{conjecture} for second powers.

\begin{question}
	If $G$ is a chordal graph, then does $J(G)^2$ has linear quotients with respect to a rooted list of minimal generators?
\end{question}

\section*{Acknowledgment}
The first author's research was supported by T\"{U}B\.{I}TAK, grant no. 118C033.

\end{document}